\definecolor{winered}{rgb}{0.5,0,0}
\definecolor{darkgreen}{rgb}{0.0,0.5,0}
\pgfplotsset{compat=1.10}
\newcommand{\xvec}{\mathbf{x}}
\newcommand{\Phivec}{\mathbf{\Phi}}
\newcommand{\vvec}{\mathbf{v}}
\newcommand{\wvec}{\mathbf{w}}
\newcommand{\vbar}{\bar{\vvec}}
\newcommand{\vhat}{\hat{\vvec}}
\newcommand{\evec}{\mathbf{e}}
\newcommand{\Lvec}{\mathbf{L}}
\newcommand{\Hvec}{\mathbf{H}}
\newcommand{\Tbar}{\overline{T}}
\newcommand{\Vbar}{\overline{V}}
\newcommand{\PH}{\mathrm{\Theta}}
\newcommand{\ph}{\mathrm{\theta}}
\newcommand{\ip}{\text{ip}}
\newcommand{\nip}{n_{\text{ip}}}
\newcommand{\R}[1]{\mathbb{R}^{#1}}
\newcommand{\diver}{\text{div}}
\newcommand{\refer}[1]{(\ref{#1})}
\newcommand{\ndof}{\text{ndof}}
\newcommand{\ndofv}{\mathrm{ndof}_\vvec}
\newcommand{\intrd}{\int\limits_{\R{3}}}
\DeclareMathOperator{\myspan}{span}
\newcommand\restr[2]{{
  \left.\kern-\nulldelimiterspace 
  #1 
  \vphantom{\big|} 
  \right|_{#2} 
  }}
\newtheorem{theorem}{Theorem}
\newtheorem{lemma}[theorem]{Lemma}
\newtheorem{remark}[theorem]{Remark}
\newtheorem{definition}[theorem]{Definition}
\title{A polynomial spectral method for the spatially homogeneous Boltzmann equation.}
\author{Gerhard Kitzler and Joachim Sch\"oberl}
\begin{document}

\maketitle

\begin{abstract}
\noindent
We present a spectral Petrov-Galerkin method for the Boltzmann collision operator. We expand the density distribution $f$ to high order orthogonal polynomials multiplied by a Maxwellian. By that choice, we can approximate on the whole momentum domain $\R3$ resulting in high accuracy at the evaluation of the collision operator. Additionally, the special choice of the test space naturally ensures conservation of mass, momentum and energy. By numerical examples we demonstrate the convergence (w.r.t. time) to the exact stationary solution. For efficiency we transfer between nodal and Maxwellian weighted Spherical Harmonics which are orthogonal w.r.t. the innermost integrals of the collision operator. Combined with efficient transformations between the bases and the calculation of the outer integrals this gives an algorithm of complexity $\mathcal{O}(N^7)$ and a storage requirement $\mathcal{O}(N^4)$ for the evaluation of the non linear Boltzmann collision operator. The presented method is applicable to a general class of collision kernels, among others including Maxwell molecules, hard and variable hard spheres molecules. Although faster methods are available, we obtain high accuracy even for very low expansion orders.
\end{abstract}

\textbf{Key words.}  Boltzmann equation, Petrov-Galerkin method, spectral method

\textbf{AMS subject classifications.}
  35Q20, 65M70

\section{The Boltzmann equation}
The Boltzmann equation is a statistical model for transport phenomena in a sufficiently dilute gas. In addition to gas dynamics, kinetic models as the Boltzmann equation have among others, attracted a wide range of applications including plasma physics, electron transport in semiconductors and also disciplines from biology. The unknown in kinetic equations is the density distribution in phase space, typically denoted by $f = f(t,\xvec,\vvec)$, such that
\begin{equation*}
\intrd f(t,\xvec,\vvec)\,d\vvec
\end{equation*}
gives the mass density at time $t$ and position $\xvec$. Similar, the momentum density, energy density and temperature are defined in terms of moments of $f$:
\begin{align*}
\rho(t,\xvec) &:= \intrd f(t,\xvec,\vvec)\,d\vvec \\
\mathbf{V}(t,\xvec)    &:= \frac{1}{\rho(t,\xvec}\intrd \vvec f(t,\xvec,\vvec)\,d\vvec \\
E(t,\xvec) &:= \frac{1}{2}\intrd |\vvec|^2f(t,\xvec,\vvec)\,d\vvec \\
T(t,\xvec)    &:= \frac{1}{3\rho(t,\xvec)}\intrd |\vvec-V(t,\xvec)|^2f(t,\xvec,\vvec)\,d\vvec.
\end{align*}
The Boltzmann equation which governs the time evolution of $f$ is
\begin{equation*}
\frac{\partial }{\partial t}f + \diver_\xvec(\vvec f) = Q(f),
\end{equation*}
where the Boltzmann collision operator $Q(f)(t,\xvec,\vvec) = Q(f(t,\xvec,\,.\,) )(\vvec)$ is
\begin{equation}
Q(f(t,\xvec,\,.\,) )(\vvec) := \intrd\int\limits_{S^2}B(\vvec,\wvec,\evec') [f(t,\xvec,\vvec')f(t,\xvec,\wvec')-f(t,\xvec,\vvec)f(t,\xvec,\wvec)]d\evec'd\wvec.
\label{eq:collop}
\end{equation}
Here, $\vvec' = \frac{\vvec+\wvec}{2}+\evec'\frac{|\vvec-\wvec|}{2}$ and $\wvec' = \frac{\vvec+\wvec}{2}-\evec'\frac{|\vvec-\wvec|}{2}$ are the pre collisional velocities. The collision operator is non linear, local in time and position, but global in velocity. It satisfies mass, momentum and energy conservation, expressed by the equations
\begin{equation}
\intrd Q(f)\left(\begin{matrix}1\\ \vvec \\ |\vvec|^2 \end{matrix}\right) = \mathbf{0}.
\label{eq:coll_invariants}
\end{equation}
The functions $1,\vvec,|\vvec|^2$ are consequently termed collision invariants. Also, each $\phi$ satisfying $\int Q(f)\phi\,d\vvec = 0$ is a linear combination of the basic collision invariants \cite{Cercignani_1994}. 
\par
Another important property of $Q$ we use in our numerical method is the kernel of the collision operator $\mathrm{ker}\,Q = \{f: f(t,\xvec,\vvec) = \rho(t,\xvec) e^{-\bigl|\frac{\vvec-\Vbar(t,\xvec)}{\sqrt{\Tbar(t,\xvec)}}\bigr|^2} \}$. In context of the Boltzmann equation these Gaussian peaks are termed Maxwellians. Note that $\mathrm{ker}\,Q$ also characterizes stationary solutions of the homogeneous Boltzmann equation 
\begin{equation}
\frac{\partial }{\partial t}f = Q(f).
\label{eq:bz_hom}
\end{equation}
The function $B(\vvec,\wvec,\evec') = B(|\vvec-\wvec|,\tfrac{(\vvec-\wvec)\cdot e'}{|\vvec-\wvec|})$ is called collision kernel. It is the probability density that two particles with velocity $\vvec'$ and $\wvec'$ collide and result in velocities $\vvec$ and $\wvec$. In many relevant cases, $B$ factorizes $B(\vvec,\wvec,\evec') = b_r(|\vvec-\wvec|)b_\theta(\tfrac{(\vvec-\wvec)\cdot \evec'}{|\vvec-\wvec|})$, with a power law for $b_r(|\vvec-\wvec|) = |\vvec-\wvec|^\beta$, with $\beta \in [0,1]$.
We refer to \cite{Cercignani_1994} for the mathematical theory of rarefied gas dynamics.
\par
Numerical methods have to tackle several problems. That is the high dimensionality: $(\xvec,\vvec) \in \R{3\times 3}$. The collision operator $Q$ has a quadratic non linearity and requires typically $\mathcal{O}(\ndof^3)$ operations for application, additionally its conservation properties shall hold on the discrete level.
\par
Due to the complexity and the high dimensionality, stochastic methods such as Direct Simulation Monte Carlo methods \cite{doi:10.1143/JPSJ.49.2050,MR845926,MR1352466,Rjasanow_2005} are attractive from a computational point of view. Turning to deterministic methods, discrete velocity models are among the most popular ones. Simplified speaking, particles are only allowed to have velocities on a finite grid. Consequently, the collision mechanism needs to be discretized in such a way, that pre and post collisional velocities are nodes of the grid, while maintaining the main physical properties of the collisional process. We refer to \cite{bobylevdvm1995,doi:10.1080/00411459608204829,Panferov_DVM}.

Widely used deterministic approaches are spectral methods \cite{Pareschi_fft_homo,MR1439069,Bobylev_hard_sphere,MR1756425,Ibragimov2002,MR2240637,GAMBA20092012,10.2307/43693916,MR2746671,doi:10.1137/16M1096001}. Usually the density distribution is approximated by a trigonometric Fourier expansion on some bounded domain. These methods benefit from the orthogonality of the trigonometric expansion, giving efficient algorithms for the collision operator. On the other hand, the integrals defining the collision operator, as well as the approximation domain have to be truncated at the expense of accuracy. The periodicity of the trigonometric basis combined with the larger support of $Q$ regarding the density distribution $f$ may additionally produce aliasing errors.
\par 
The current work is the natural extension from 2 velocity dimensions, discussed in  \cite{Kitzler2016,Kitzler2015} to 3. Closely related to these papers is the spectral method by Hiptmair et al. \cite{FGH14_563,GHP15_628}, being the extension of \cite{MR1701710} from radially symmetric solutions to general ones. This method was conducted in parallel to our 2d paper. To the best of our knowledge this approach is not yet available for three velocity dimensions.
\par
In a recent work by I. Gamba and S. Rjasanow \cite{GambaR2017} a similar expansion basis consisting of Spherical Harmonics and generalized Laguerre polynomials is constructed and used in a Petrov-Galerkin projection.  A different scaling in the polynomial argument results in a non diagonal mass matrix. Efficiency considerations for the collision operator are not presented therein. The extension to spatially inhomogeneous problems is treated in \cite{Torsten2017}.
Let us additionally state that in several situations, the non linear Boltzmann collision operator may be replaced by the BGK approximation, resulting in a simpler evolution equation for the unknown density $f$. We refer to \cite{cercignani1969mathematical} and the references therin for the BGK model. Numerical treatment of the BGK approximation can be found in \cite{Chen633}.

Our paper is specifically devoted to efficient application of the collision integrals. In contrast to spectral methods based on trigonometric expansion (which require a domain truncation), there is -- to the best of our knowledge -- no publication yet dealing with efficent evaluation of the collision operator in a polynomial spectral method on the unbounded velocity space. The presented algorithm impresses by low storage requirements of only $\mathcal{O}(N^4)$, where $N$ is the number of unknowns per direction, what is a lower storage requirement as in Fourier spectral methods \cite{doi:10.1137/16M1096001}. Calculations with 32 degrees of freedom for each Cartesian direction require less than 3 Megabytes. Combined with the good approximation properties of the trial space, lower expansion orders already yield highly accurate results (see sections \ref{subsec:maxwell_moments},\ref{subsec:hs_moments}) and therefore the higher numerical complexity of $\mathcal{O}(N^7)$ is compensated. In addition, using our algorithm for the evalution of the collision integral, matrix-matrix multiplications are performed almost exclusively, what enables the usage of highly optimized Lapack routines \cite{laug}. 

\section{A spectral projection for the homogeneous equation}
We consider the homogeneous Boltzmann equation \refer{eq:bz_hom} for some $f = f(t,\vvec)$. In the sequel our spectral Petrov Galerkin projection for that equation is presented. Let us first motivate the choice of the trial space. Since $Q$ is global in $\vvec$, we propose a global trial space. In order to have the kernel and consequently the stationary solution in the trial space we choose the following space for approximating the density $f$:
\begin{equation*}
V_{\Tbar,\Vbar,N}:=\{f \in L_2(\R3): f(\vvec) = e^{-\bigl|\frac{\vvec-\Vbar}{\sqrt{\Tbar}}\bigr|^2}p(\vvec):\,p \in P^N(\R3) \}.
\end{equation*}
$P^N(\R3)$ is the space of polynomials of total degree $N$. The parameters $\Vbar$ and $\Tbar$ are the macroscopic velocity and temperature of the Maxwellian in the expansion. If they are chosen in accordance to $f$'s mean velocity and temperature we expect excellent approximation properties. Due to the conservation properties of $Q$, $\Vbar$ and $\Tbar$ can be chosen a priori.
A motivation for the test space are the conservation properties of $Q$, which lead to $\frac{\partial}{\partial t}\rho(t,\xvec) = 0,\frac{\partial}{\partial t}\mathbf{V}(t,\xvec) = \mathbf{0},\frac{\partial}{\partial t}E(t,\xvec) = 0$ and $\frac{\partial}{\partial t}T(t,\xvec) = 0$. These conservations are naturally satisfied as soon as $1,\vvec,|\vvec|^2$ are in the test space. This leads to our test space, which is
\begin{equation*}
V_N:=P^N(\R3).
\end{equation*}
The Petrov-Galerkin projection now reads
\begin{equation}
\text{Find } f \in V_{\Tbar,\Vbar,N},\text{s.t.}\;\;\;\frac{\partial }{\partial t}\intrd f\phi\,d\vvec = \intrd Q(f)\phi\,d\vvec \quad \forall\, \phi \in V_N.
\label{eq:petrovgalerkin_hom}
\end{equation}
To obtain \refer{eq:coll_invariants} on the discrete level, the collision invariants need to be in $V_N$. Consequently $N=2$ is a minimum requirement.

\subsection{Polynomial bases in \texorpdfstring{$P^N(\R3)$ and $Q^N(\R3)$}{Polynomialbases} }
In this section we present the polynomial basis of $P^N(\R3)$ and $Q^N(\R3)$ we use. $Q^N(\R3)$ is the space of polynomials of partial degree $N$. To define the basis of $Q^N(\R3)$ we denote by $(v_\text{ip},\omega_\text{ip}),\,i=0\ldots N$ the nodes and weights of a Gauss-Hermite quadrature \cite{shen2011spectral} satisfying
\begin{equation*}
\int\limits_{\R{}} e^{-v^2}p(v) = \sum\limits_{\ip=0}^{N} \omega_\ip p(v_\ip),\;\forall\, p \in P^{2N+1}(\R{}).
\end{equation*}
\par
The basis polynomials are defined as Lagrange collocation polynomials to the Gauss-Hermite quadrature nodes and are denoted by $l$:
\begin{equation*}
l_j(v) := \prod\limits_{i=0 \atop i\neq j}^{N} \frac{v-v_i}{v_j-v_i}.
\end{equation*}
The three dimensional basis is constructed as the tensor product of the 1D polynomials and its elements are denoted by $L^{(N)}_j,j=0\dots \ndofv-1=(N+1)^3-1$:
\begin{equation*}
L^{(N)}_j(\vvec) = l_l(\vvec_1)l_m(\vvec_2)l_n(\vvec_3),
\end{equation*}
with $j = (N+1)^2l + (N+1)m+n$.
This is an orthogonal basis of $Q^N(\R3)$ resulting in a diagonal mass matrix:
\begin{equation}
\intrd e^{-|\vvec|^2}L^{(N)}_m(\vvec)L^{(N)}_n(\vvec)\,d\vvec = \sum\limits_{\ip=0}^{\ndofv-1}\omega^{(3)}_\ip L^{(N)}_m(\vvec_\ip^{(3)})L^{(N)}_n(\vvec_\ip^{(3)}) = \delta_{m,n}\omega^{(3)}_n
\label{eq:orthogonality_mass}
\end{equation}
In \refer{eq:orthogonality_mass}, $\vvec_{\ip}^{(3)}$ and $\omega^{(3)}_{\ip}$ correspond to a Cartesian product of Gauss Hermite formula with nodes $\vvec_{\ip}^{(3)} = (v_i,v_j,v_k)$ and $\omega^{(3)}_{\ip} = \omega_i\omega_j\omega_k$ with $\ip=(N+1)^2i+(N+1)j+k$.
\par
The basis of $P^N(\R3)$ are scaled Hermite polynomials:
\begin{equation}
H_{i,j,k}(\vvec) := h_i(\vvec_1)h_j(\vvec_2)h_k(\vvec_3),\quad 0\leq i+j+k\leq N,
\label{eq:Hermite_basis}
\end{equation}
with $h_i$ denoting the scaled 1D Hermite polynomial of degree $i$ \cite{Abramowitz_1964,shen2011spectral}. The scaling is such that $\int e^{-|v|^2}h_j(v)^2\,dv =1$. The polynomial degree of $H_{i,j,k}$ is $i+j+k$, the polynomial degree w.r.t. $(\vvec_1,\vvec_2)$ is $i+j$. The mass matrix in the Hermite basis is also diagonal, i.e.
\begin{equation*}
\intrd e^{-|\vvec|^2}H_{i,j,k}(\vvec)H_{i',j',k'}(\vvec)\,d\vvec = \delta_{i,i'}\delta_{j,j'}\delta_{k,k'}, \quad\quad i+j+k \text{ and }i'+j'+k' = 0\ldots N.
\end{equation*}

For given $\Vbar$ and $\Tbar$, the polynomials are scaled and shifted in the argument to incorporate the above orthogonality relations in the basis. Thus, we have
\begin{equation*}
V_N = \myspan\{H_{i,j,k}\left(\tfrac{\vvec-\Vbar}{\sqrt{\Tbar}}\right) , i+j+k=0\ldots N \},\quad V_{\Tbar,\Vbar,N} = e^{-\bigl|\frac{\vvec-\Vbar}{\sqrt{\Tbar}}\bigr|^2}V_N.
\end{equation*}
With the above notation we additionally emphasize the scaling of the basis functions we use in practice.
\section{Efficient evaluation of the collision integral}
For the collision algorithm we need some additional preparations. First of all, we use the translational invariance of $Q$ to make the collision integrals $\int_{\R3} Q(f)\phi\,d\vvec$ independent of $\Vbar$. We also rescale $\vvec$ to obtain an explicit dependency on $\Tbar$. Let $\frac{\vvec-\Vbar}{\sqrt{\Tbar}}=\tilde{\vvec}$ and $\frac{\wvec-\Vbar}{\sqrt{\Tbar}}=\tilde{\wvec}$ in $\int Q(f)\phi\,d\vvec$, remove the tilde signs to obtain
\begin{equation*}
\intrd Q(f)\phi\,d\vvec = \Tbar^{3+\frac{\beta}{2}}\intrd Q(f^{1,0})\phi^{1,0}\,d\vvec,
\end{equation*}
with centred density function $f^{0,1}(t,\vvec):=f(t,\sqrt{\Tbar}\vvec+\Vbar)$. Thus, our considerations concerning the collision operator can be done without shifted and scaled velocity, i.e. with centred bases.
\par
Next we provide a representation for $\int Q(f)\phi\,d\vvec$ needed for our algorithm.
\begin{lemma}
The collision operator $Q(f)$ defined in \refer{eq:collop} satisfies \cite{Cercignani_1994}
\label{lemma:collrepresentation}
\begin{equation*}
\intrd Q(f)\phi(\vvec)\,d\vvec = \intrd\intrd\int\limits_{S^{2}}B(\vvec,\wvec,\evec')f(\vvec)f(\wvec)[\phi(\vvec')-\phi(\vvec)]d\evec'd\wvec d\vvec
\end{equation*}
\end{lemma}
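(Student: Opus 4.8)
The plan is to substitute the definition \refer{eq:collop} of $Q(f)$ into the left-hand side and split the resulting triple integral over $\R3\times\R3\times S^{2}$ into a gain part carrying the factor $f(\vvec')f(\wvec')$ and a loss part carrying $f(\vvec)f(\wvec)$. The loss part is already $-\intrd\intrd\int_{S^2}B(\vvec,\wvec,\evec')f(\vvec)f(\wvec)\phi(\vvec)\,d\evec'd\wvec d\vvec$, which is exactly the $-\phi(\vvec)$ contribution on the right-hand side, so nothing has to be done there. The entire argument thus reduces to showing that the gain part equals $\intrd\intrd\int_{S^2}B(\vvec,\wvec,\evec')f(\vvec)f(\wvec)\phi(\vvec')\,d\evec'd\wvec d\vvec$.

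For that I would introduce the pre/post-collisional map $\Psi:(\vvec,\wvec,\evec')\mapsto(\vvec',\wvec',\tfrac{\vvec-\wvec}{|\vvec-\wvec|})$ on $\R3\times\R3\times S^{2}$ and carry out the corresponding change of variables in the gain integral. First I would check that $\Psi$ is an involution: since $\tfrac{\vvec'+\wvec'}{2}=\tfrac{\vvec+\wvec}{2}$ and $|\vvec'-\wvec'|=|\vvec-\wvec|$ (because $\vvec'-\wvec'=|\vvec-\wvec|\evec'$ with $|\evec'|=1$), applying $\Psi$ a second time returns $(\vvec,\wvec,\evec')$. Consequently the substitution sends $f(\vvec')f(\wvec')\mapsto f(\vvec)f(\wvec)$ while the test weight becomes $\phi(\vvec)\mapsto\phi(\vvec')$, which is precisely the $+\phi(\vvec')$ term wanted. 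It then remains to confirm that both the kernel and the measure are left untouched. Kernel invariance is a direct computation: in the new variables the relative speed $|\vvec'-\wvec'|=|\vvec-\wvec|$ is unchanged, and with the new sphere variable $\tfrac{\vvec-\wvec}{|\vvec-\wvec|}$ the deviation cosine becomes $\tfrac{(\vvec'-\wvec')\cdot(\vvec-\wvec)}{|\vvec'-\wvec'|\,|\vvec-\wvec|}=\tfrac{(\vvec-\wvec)\cdot\evec'}{|\vvec-\wvec|}$, so $B$ evaluated after the substitution coincides with $B(\vvec,\wvec,\evec')$ through its dependence on $|\vvec-\wvec|$ and this cosine.

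The one genuinely technical point, and the step I expect to be the main obstacle, is the unit Jacobian of $\Psi$. I would handle it by passing to center-of-mass and relative velocity coordinates $\mathbf{m}=\tfrac{\vvec+\wvec}{2}$, $\mathbf{u}=\vvec-\wvec$, under which $\mathbf{m}$ is fixed and $\Psi$ acts only on $(\mathbf{u},\evec')$ as $(\mathbf{u},\evec')\mapsto(|\mathbf{u}|\evec',\tfrac{\mathbf{u}}{|\mathbf{u}|})$. Writing $\mathbf{u}=r\omega$ with $r=|\mathbf{u}|$ and $\omega\in S^{2}$, so that $d\mathbf{u}\,d\evec'=r^{2}\,dr\,d\omega\,d\evec'$, the map keeps $r$ fixed and merely exchanges the two sphere variables $\omega\leftrightarrow\evec'$; hence the measure $r^{2}\,dr\,d\omega\,d\evec'$ is preserved and $\Psi$ has Jacobian one. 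Collecting the transformed gain part with the untouched loss part then yields $\intrd Q(f)\phi\,d\vvec=\intrd\intrd\int_{S^2}B(\vvec,\wvec,\evec')f(\vvec)f(\wvec)[\phi(\vvec')-\phi(\vvec)]\,d\evec'd\wvec d\vvec$, as claimed. Throughout I would assume enough decay and integrability of $f$ and $\phi$ to justify Fubini and the change of variables, which is consistent with the Maxwellian-weighted trial and test spaces used here.
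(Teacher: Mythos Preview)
Your argument is correct and is the standard textbook derivation of the weak form of the collision operator. The paper itself does not supply a proof of this lemma; it merely states the identity and cites \cite{Cercignani_1994}, so there is no in-paper proof to compare against. Your route---splitting into gain and loss parts, handling the gain part via the involutive change of variables $(\vvec,\wvec,\evec')\mapsto(\vvec',\wvec',\tfrac{\vvec-\wvec}{|\vvec-\wvec|})$, and verifying kernel invariance and unit Jacobian in center-of-mass/relative coordinates---is precisely the argument one finds in the cited reference.
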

\begin{remark}
A straight forward evaluation of the collision integrals for $f\in V_{\Tbar,\Vbar,N}$ requires evaluation of
\begin{equation*}
\intrd Q(f)\phi \,d\vvec = \sum\limits_{n=0}^{\ndofv-1}\sum\limits_{m=0}^{\ndofv-1}c_nc_mq_{n,m,j}\quad\quad j = 0\ldots \ndofv-1,
\end{equation*}
where $q_{n,m,j} = \int\int\int B(\vvec,\wvec,\evec') [ L^{(N)}_n(\vvec')L^{(N)}_m(\wvec')-L^{(N)}_n(\vvec)L_m(\wvec)]L^{(N)}_j(\vvec)d\evec'd\vvec d\wvec$.
Evaluating the above double sum for each $j$ requires $\ndofv^3 = (N+1)^9$ operations.
\end{remark}
\subsection{Collision integrals in mean and relative velocity}
Next, we express the collision integral in mean and relative velocities. We start with the representation from lemma \ref{lemma:collrepresentation}. If we let $\vbar:= \frac{\vvec+\wvec}{2},\,\hat{\vvec}:= \frac{\vvec-\wvec}{2}$ we obtain
\begin{equation}
\begin{aligned}
&\intrd Q(f(\vvec))\phi(\vvec)\,d\vvec \\&= 8\intrd \intrd \int\limits_{S^2}b_r(2|\hat{\vvec}|)b_\theta(\tfrac{\hat{\vvec}\cdot \evec'}{|\hat{\vvec}|}) f(\vbar+\hat{\vvec})f(\vbar-\hat{\vvec})[\phi(\vbar+\evec'|\hat{\vvec}|)-\phi(\vbar+\hat{\vvec})]\,d\evec'd\hat{\vvec}\,d\vbar.
\end{aligned}
\end{equation}
Throughout the integrand, $f$ and $\phi$ are evaluated centered at $\vbar$. This is a shift of the coordinate origin. Thus, we approximate $f$ not on the grid associated with the Lagrange polynomials, but on a grid shifted by the mean velocity $\vbar$. We denote this by $f^{\vbar}(\hat{\vvec}):=f(\vbar+\hat{\vvec})$, the shifted test function is $\phi^{\vbar}(\vhat) = \phi(\vbar+\vhat)$. Thus,
\begin{equation}
\begin{aligned}
&\intrd Q(f)(\vvec)\phi(\vvec)\,d\vvec \\&= 8\intrd \intrd \int\limits_{S^2}b_r(2|\hat{\vvec}|)b_\theta(\tfrac{\hat{\vvec}\cdot \evec'}{|\hat{\vvec}|}) f^{\vbar}(\hat{\vvec})f^{\vbar}(-\hat{\vvec})[\phi^{\vbar}(\evec'|\hat{\vvec}|)-\phi^{\vbar}(\hat{\vvec})]\,d\evec'd\hat{\vvec}\,d\vbar.
\end{aligned}
\end{equation}
Finally, letting $f_2^{\vbar}(\hat{\vvec}):=f^{\vbar}(\hat{\vvec})f^{\vbar}(-\hat{\vvec})$, we arrive at
\begin{align}
\intrd Q(f(\vvec))&\phi(\vvec)\,d\vvec = 8\intrd \intrd\int\limits_{S^2} b_r(2|\hat{\vvec}|)b_\theta(\tfrac{\hat{\vvec}\cdot \evec'}{|\hat{\vvec}|})f_2^{\vbar}(\hat{\vvec})[\phi^{\vbar}(\evec'|\hat{\vvec}|)-\phi^{\vbar}(\hat{\vvec})]\,d\evec'd\hat{\vvec} \\ & =\sqrt{2}^3\intrd \underbrace{\intrd\int\limits_{S^2} b_r(|\sqrt{2}\hat{\vvec}|)b_\theta(\tfrac{\hat{\vvec}\cdot \evec'}{|\hat{\vvec}|})f_2^{\vbar}(\tfrac{\hat{\vvec}}{\sqrt{2}})[\phi^{\vbar}(\evec'|\tfrac{\hat{\vvec}}{\sqrt{2}}|)-\phi^{\vbar}(\frac{\hat{\vvec}}{\sqrt{2}})]\,d\evec'd\hat{\vvec}}_{:=Q^I(b_r f^{\vbar}_2,\phi^{\vbar})(\vbar)}\,d\vbar \\
&= \sum\limits_{\ip=0}^{n_\ip-1} \omega_\ip S^{\frac{\vvec_\ip}{\sqrt{2}},\phi} Q^I(b_r f^{\frac{\vvec_\ip}{\sqrt{2}}}_2,\phi^{\frac{\vvec_\ip}{\sqrt{2}}})(\tfrac{\vvec_\ip}{\sqrt{2}}).
\label{eq:inner_coll_op}
\end{align}
The scaling by $\tfrac{1}{\sqrt{2}}$ is to obtain Maxwellians with temperature 1 w.r.t. $\vbar$ and $\vhat$. As before, $(\omega^{(3)}_i,\vvec^{(3)}_i),\,i=0\ldots n_\ip-1$ are weights and nodes of a 3D Gauss-Hermite quadrature rule. We refer to $Q^I$ as the inner collision operator.
\subsubsection{Calculating \texorpdfstring{$f_2^{\vbar}$}{calcf2}}
\label{subsec:f2}
The point wise multiplication $f^{\vbar}(\vhat)f^{\vbar}(-\vhat)=f_2^{\vbar}(\vhat)$ can be done efficient in the Lagrange basis, so we start with $f(\vvec) = e^{-|\vvec|^2}\sum c_jL_j(\vvec)$. From 
\begin{equation*}
f_2^{\vbar}(\hat{\vvec}) = f^{\vbar}(\hat{\vvec})f^{\vbar}(-\hat{\vvec}) = e^{-|\vbar+\vhat|^2}p(\vbar+\vhat)e^{-|\vbar-\vhat|^2}p(\vbar-\vhat),\quad\quad p \in P^{N}(\R3)
\end{equation*}
we deduce that the appropriate Maxwellian for the $f_2^{\vbar}$ is $e^{-2|\vbar|^2 -2|\vhat|^2}$ and that it's polynomial degree is $2N$. Thus, we expand 
\begin{equation*}
f_2^{\vbar}(\vhat)=e^{-2|\vbar|^2-2|\vhat|^2}\sum\limits_{i=0}^{\ndofv^{(2)}-1}e_i L_i^{(2N)}(\vhat),\quad\quad \ndofv^{(2)}:=(2N+1)^3.
\end{equation*}
The Lagrange polynomials collocation properties lead to 
\begin{equation}
e_j = p(\mathbf{\vbar+\mu}_j)p(\vbar-\mathbf{\mu}_j),\quad\quad j=0\ldots \ndofv^{(2)}-1,
\label{eq:f2}
\end{equation}
where $\mathbf{\mu}_j$ are the collocation nodes for $L^{(2N)}$. Incorporating the scaling from \refer{eq:inner_coll_op} we use $\mathbf{\mu}_j = \frac{1}{\sqrt{2}}\vvec_j$, with $\vvec_j$ the Gauss Hermite nodes of order $2N$. 
The values $p(\vbar+\mu_j)$ are obtained from
\begin{equation*}
f(\vbar+\vhat)=e^{-|\vbar+\vhat|^2}\sum\limits_{i=0}^{\ndofv-1} c_i L_i^{(N)}(\vbar+\vhat) = e^{-|\vbar+\vhat|^2} p(\vbar+\vhat). 
\end{equation*}
Using $\vhat = \mu_j$ results in
\begin{equation}
p(\vbar+\mu_j) = \sum\limits_{i=0}^{\ndofv-1} c_i L_i^{(N)}(\vbar+\mu_j),\quad\;\; j = 0\ldots \ndofv^{(2)}-1.
\label{eq:evaluate_shifted}
\end{equation}
Due to symmetry of the nodes, $p(\vbar-\mu_j) = p(\vbar + \mu_{\ndofv^{(2)}-1-j})$.
A factorization into one dimensional shifts can be applied to the sum in \refer{eq:evaluate_shifted}, resulting in $\mathcal{O}(N^4)$ complexity for $p(\vbar+\mu_j),j=0\ldots \ndofv^{(2)}-1$. We denote the shift matrix for a given $\vbar$ by $S^{\vbar}$, $S^{\vbar}_{ji} = L_i(\vbar+\mu_j)$.
\par 
The calculation of the test functions $\Lvec_j(\vbar+\vhat)$ from the shifted Lagrange polynomials $\Lvec^{\vbar}(\vhat)$ is achieved by the transposed matrix ${S^{\vbar}}^t$, i.e. $\Lvec(\vbar+\vhat) = {S^{\vbar}}^t \Lvec^{\vbar}(\vhat)$, see \cite{Kitzler2015,Kitzler2016} for more details.

\subsection{The \texorpdfstring{$\vhat$}{vhatint} integral}
For the inner collision operator $Q^I$ we transform $f^{\tfrac{\vbar}{\sqrt{2}}}_2(\tfrac{\vhat}{\sqrt{2}})$ to a polynomial basis which gives a diagonal $Q^I$. We first introduce the involved polynomial bases and then show how to obtain efficient transformations. We use the hierarchical Hermite polynomial basis from \refer{eq:Hermite_basis}, a hierarchical basis in cylinder and finally a hierarchical basis in spherical coordinates. 

\subsubsection{Cylinder Hermite basis}
We extend the polar basis from \cite{Kitzler2015,Kitzler2016} by a component in $z$ direction. To that end we use the shorthand notation $I(i,j):=2j+i\mod 2$. We define the 2D Polar Laguerre basis of order $i$:
\begin{definition}
The Polar-Laguerre polynomials $\{\Psi_{i,j}^r,\, j=0\ldots \lfloor 0.5 i\rfloor$,\\ $r\in\{\cos,\sin\}\}$ are defined via
\begin{align*}
\Psi_{i,j}^{\cos}((\vvec_1,\vvec_2)) &:= \gamma_{i,j}\cos(I(i,j)\phi)r^{I(i,j)}\mathcal{L}_{\frac{i-I(i,j)}{2}}^{I(i,j)}(r^2) \\
\Psi_{i,j}^{\sin}((\vvec_1,\vvec_2)) &:= \gamma_{i,j}\sin(I(i,j)\phi)r^{I(i,j)}\mathcal{L}_{\frac{i-I(i,j)}{2}}^{I(i,j)}(r^2).
\end{align*}
$(r,\phi)$ are the polar coordinates of $(\vvec_1,\vvec_2)$, The normalization is $\gamma_{0,2j} = \sqrt{\tfrac{2}{\pi}}$ and $\gamma_{i,j} = \sqrt{\tfrac{1}{\pi}}, i\neq 0 \vee j\not\in 2\mathbb{N}$. $\mathcal{L}_k^\alpha(x)$ is the (scaled) associated Laguerre polynomial satisfying $\int_{\R{+}}e^{-x}x^\alpha \mathcal{L}_i^\alpha(x)\mathcal{L}_j^\alpha\,dx =\delta_{i,j}$ \cite{Abramowitz_1964, shen2011spectral}.
\label{def:polarlaguerre}
\end{definition}
Now we construct a basis of $V_N$ in cylinder coordinates.
\begin{definition}
\label{def:cylinderlaguerre}
The Cylinder Hermite polynomials are defined as
\begin{align*}
\PH_{k,i,j}^{\cos}(\vvec) &:= \Psi_{i,j}^{\cos}((\vvec_1,\vvec_2)) h_{k-i}(\vvec_3) \\
\PH_{k,i,j}^{\sin}(\vvec) &:= \Psi_{i,j}^{\sin}((\vvec_1,\vvec_2))h_{k-i}(\vvec_3),
\end{align*}
with $(r,\phi,\vvec_3)$ being the cylinder coordinates of $\vvec$. The indices are $j = 0\dots \lfloor 0.5 i\rfloor$, $i = 0\dots k$ and $k=0\dots N$.
\end{definition}
\begin{remark}
Note that in definitions \ref{def:polarlaguerre} and \ref{def:cylinderlaguerre}, the $\sin$ basis functions only appear when $I(i,j) \neq 0$. We keep that in mind, but for simplicity we do not explicitly distinguish this special case in the notation.
\end{remark}
As is shown in \cite{Kitzler2015,Kitzler2016}, $\Psi^t_{i,j}, t\in \{\cos,\sin\}$ is a polynomial of total degree $i$ w.r.t. $\vvec_1$ and $\vvec_2$. Multiplied by $h_{k-i}(\vvec_3)$ gives a polynomial of total degree $k$ w.r.t. to $\vvec$. \\In the subsequent lemmata we show that the Cylinder Hermite polynomials form a basis of $P^N(\R3)$ and are orthogonal w.r.t. to the Maxwellian weighted $L_2-$inner product.
\begin{lemma}
The Cylinder Hermite polynomials are orthonormal w.r.t. to the Maxwellian weighted $L_2-$inner product, i.e.
\begin{equation*}
\intrd e^{-|\vvec|^2}\PH_{k,i,j}^t(\vvec){\PH}^{t'}_{k',i',j'}(\vvec)\,d\vvec = \delta_{k,k'}\delta_{i,i'}\delta_{j,j'}\delta_{t,t'},\quad r\in\{\cos,\sin\}.
\end{equation*}
\end{lemma}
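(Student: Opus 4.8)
The plan is to exploit the tensor-product structure of the basis in cylinder coordinates. Writing $\vvec=(\vvec_1,\vvec_2,\vvec_3)$ and using $|\vvec|^2=(\vvec_1^2+\vvec_2^2)+\vvec_3^2=r^2+\vvec_3^2$, the Maxwellian weight factorizes as $e^{-|\vvec|^2}=e^{-r^2}e^{-\vvec_3^2}$ and the volume element splits as $d\vvec=d(\vvec_1,\vvec_2)\,d\vvec_3$. Since $\PH_{k,i,j}^{t}(\vvec)=\Psi_{i,j}^{t}((\vvec_1,\vvec_2))\,h_{k-i}(\vvec_3)$ is a product of a function of $(\vvec_1,\vvec_2)$ and a function of $\vvec_3$, Fubini's theorem turns the inner product into the product of a planar integral over $(\vvec_1,\vvec_2)$ and an axial integral over $\vvec_3$.

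First I would dispatch the axial integral. By the orthonormality of the scaled one-dimensional Hermite polynomials stated earlier, i.e. $\int_{\R{}} e^{-\vvec_3^2}h_m(\vvec_3)h_n(\vvec_3)\,d\vvec_3=\delta_{m,n}$, the $\vvec_3$-integral equals $\delta_{k-i,\,k'-i'}$, since the two axial factors have degrees $k-i$ and $k'-i'$.

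Next I would treat the planar integral $\int_{\R{2}} e^{-r^2}\Psi_{i,j}^{t}\,\Psi_{i',j'}^{t'}\,d(\vvec_1,\vvec_2)$. Passing to polar coordinates $(r,\phi)$ with $d(\vvec_1,\vvec_2)=r\,dr\,d\phi$, this separates into an angular and a radial factor. The angular integral of the $\cos$/$\sin$ modes over $[0,2\pi)$ vanishes unless the two angular frequencies coincide and $t=t'$, hence forces $I(i,j)=I(i',j')$ together with $t=t'$; the radial integral, after the substitution $x=r^2$, reduces to $\int_{\R{+}} e^{-x}x^{I(i,j)}\mathcal{L}^{I(i,j)}_{(i-I(i,j))/2}(x)\,\mathcal{L}^{I(i,j)}_{(i'-I(i,j))/2}(x)\,dx$, which by the Laguerre orthonormality built into Definition \ref{def:polarlaguerre} (with the constants $\gamma_{i,j}$ absorbing the angular normalization) equals a Kronecker delta in the radial indices. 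Alternatively, this planar orthonormality may simply be quoted from \cite{Kitzler2015,Kitzler2016}.

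The step I expect to be the main obstacle is the index bookkeeping needed to reassemble the Kronecker deltas into the claimed form. The planar part yields $t=t'$, $I(i,j)=I(i',j')$ and equality of the radial indices $(i-I(i,j))/2=(i'-I(i',j'))/2$; I would show that the map $(i,j)\mapsto\bigl(I(i,j),\,(i-I(i,j))/2\bigr)$ is injective on the admissible range $0\le j\le\lfloor i/2\rfloor$ — indeed it is inverted by $i=2n+I$ and $j=\lfloor I/2\rfloor$, where $n$ denotes the radial index — so that these two conditions already give $i=i'$ and $j=j'$. Finally, combining $\delta_{i,i'}$ with the axial factor $\delta_{k-i,\,k'-i'}$ recovers $\delta_{k,k'}$, and multiplying all the deltas together yields $\delta_{k,k'}\delta_{i,i'}\delta_{j,j'}\delta_{t,t'}$ as asserted.
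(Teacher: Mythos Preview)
Your proposal is correct and follows essentially the same approach as the paper: factorize into the axial Hermite integral yielding $\delta_{k-i,k'-i'}$ and the planar Polar-Laguerre integral yielding $\delta_{i,i'}\delta_{j,j'}\delta_{t,t'}$, then combine. The paper simply cites \cite{Kitzler2015,Kitzler2016} for the planar orthonormality (in the $(i,j,t)$-indexed form directly), so your detailed injectivity argument for $(i,j)\mapsto\bigl(I(i,j),(i-I(i,j))/2\bigr)$ is not needed there and the ``main obstacle'' you anticipate does not arise.
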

\begin{proof}
By the orthogonality of the Hermite polynomials we write
\begin{align*}
&\intrd e^{-|\vvec|^2}\PH_{k,i,j}^t(\vvec){\PH}^{t'}_{k',i',j'}(\vvec)\,d\vvec \\
&= \delta_{k-i,k'-i'} \int\limits_{\R2}e^{-\vvec_1^2-\vvec_2^2}\Psi_{i,j}^t((\vvec_1,\vvec_2))\Psi_{i',j'}^{t'}((\vvec_1,\vvec_2) d(\vvec_1,\vvec_2).
\end{align*}
The value of the remaining integral is $\delta_{i,i'}\delta_{j,j'}\delta_{t,t'}$ as is shown in \cite{Kitzler2015,Kitzler2016} what concludes the proof.
\end{proof}
\begin{lemma}
The Cylinder Hermite polynomials $\PH_{k,i,j}^{t}(\vvec),\,k = 0\ldots N,i=0\ldots k$, $j=0\lfloor0.5i\rfloor$ and $t\in\{\sin,\cos\}$ from definition \ref{def:cylinderlaguerre} form a basis of $P^N(\R3)$.
\end{lemma}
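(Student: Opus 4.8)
The plan is to combine the orthonormality established in the preceding lemma with a dimension count. Since an orthonormal family is automatically linearly independent, and since each $\PH_{k,i,j}^t$ is by construction a polynomial of total degree $k\leq N$ and hence lies in $P^N(\R3)$, it suffices to show that the number of functions in the family equals $\dim P^N(\R3)=\binom{N+3}{3}$. A linearly independent subset of $P^N(\R3)$ of the right cardinality is necessarily a basis, so this count closes the argument.

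First I would count the two dimensional polar part $\Psi_{i,j}^t$ for a fixed degree $i$. The claim is that there are exactly $i+1$ such functions, matching the number of polynomials of degree exactly $i$ in two variables. This requires care over the convention that the $\sin$ function is dropped whenever $I(i,j)=2j+(i\bmod 2)=0$, and this is where the main bookkeeping lies. For $i$ even, $j$ runs from $0$ to $i/2$: the index $j=0$ yields $I=0$ and contributes only $\cos$, while each $j=1,\dots,i/2$ yields $I\neq 0$ and contributes both $\cos$ and $\sin$, for a total of $1+2\cdot(i/2)=i+1$. For $i$ odd, $I=2j+1$ is never zero, so each of the $(i+1)/2$ admissible values of $j$ contributes two functions, again totalling $i+1$.

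Next I would lift this to the cylinder basis. For a fixed total degree $k$, the index $i$ runs from $0$ to $k$, and each polar function of degree $i$ is multiplied by the single Hermite factor $h_{k-i}(\vvec_3)$; hence the number of $\PH_{k,i,j}^t$ of total degree exactly $k$ is $\sum_{i=0}^k (i+1)=\tfrac{(k+1)(k+2)}{2}$, which is precisely the number of monomials of degree exactly $k$ in three variables. Summing over $k=0,\dots,N$ then gives $\sum_{k=0}^N \tfrac{(k+1)(k+2)}{2}=\binom{N+3}{3}=\dim P^N(\R3)$, completing the count.

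The only genuinely delicate step is the polar count, because of the $\cos$/$\sin$ dropping rule and its dependence on the parity of $i$; the degenerate case $i=0$ (where there is a single constant function) is then just a sanity check on the formula $i+1$. Once this is settled the remaining sums are elementary, and the basis property follows at once from linear independence together with the matching dimension.
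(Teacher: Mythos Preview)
Your proof is correct and follows essentially the same approach as the paper: orthogonality yields linear independence, then a dimension count (for fixed $i$ there are $i+1$ functions, and summing over $i$ and $k$ gives $\tfrac{1}{6}(N+1)(N+2)(N+3)=\binom{N+3}{3}$). Your version is simply more explicit about the parity bookkeeping behind the $i+1$ count, which the paper leaves implicit.
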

\begin{proof}
Since the Cylinder Hermite polynomials are orthogonal, they are also linearly independent. Now for fixed $i$ there are $i+1$ basis polynomials. Summing  over all $i$ and $k$ gives $\tfrac{1}{6}(N+1)(N+2)(N+3)$ linear independent polynomials.
\end{proof}
\subsubsection{Spherical Laguerre Basis}
Let $Y_i^{j,t}(\theta,\varphi), |j|\leq i, (\theta,\varphi) \in [0,\pi]\times[0,2\pi]$,\\$t \in \{\cos,\sin\}$ denote the real valued Spherical Harmonic of degree $i$ and order $j$ \cite{gumerov2004}. These are defined as
$$
Y_i^{j,t}(\theta,\varphi) = c_{i,j}t(j\varphi)P_i^{|j|}(\cos(\theta)),\quad c_{i,j} = s_j \sqrt{\frac{2i+1}{4\pi} \frac{(i-|j|)!}{(i+|j|)!} },
$$
with $s_0 = 1$ and $s_j = \sqrt{2}, j\neq 0$. $P_i^j$ is the Legendre function of degree $i$ and order $j$ \cite{gumerov2004}.
From these we construct a 3D basis in spherical coordinates as given below.
\begin{definition}
The Spherical Laguerre polynomials $\Phi^{t}_{k,i,j}$ are defined via
\begin{align}
\Phi^{\cos}_{2k,i,j}(r,\theta,\varphi) &= \sqrt{2}Y_{2i}^{j,\cos}(\theta,\varphi) r^{2i}\mathcal{L}_{k-i}^{2i+0.5}(r^2),\quad & i\leq k, 0\leq j\leq 2i \\
\Phi^{\sin}_{2k,i,j}(r,\theta,\varphi) &= \sqrt{2}Y_{2i}^{j,\sin}(\theta,\varphi) r^{2i}\mathcal{L}_{k-i}^{2i+0.5}(r^2),\quad & i\leq k, 1\leq j\leq 2i \\
\Phi^{\cos}_{2k+1,i,j}(r,\theta,\varphi) &= \sqrt{2}Y_{2i+1}^{j,\cos}(\theta,\varphi) r^{2i+1}\mathcal{L}_{k-i}^{2i+1.5}(r^2),\quad & i\leq k, 0\leq j\leq 2i+1 \\
\Phi^{\sin}_{2k+1,i,j}(r,\theta,\varphi) &= \sqrt{2}Y_{2i+1}^{j,\sin}(\theta,\varphi) r^{2i+1}\mathcal{L}_{k-i}^{2i+1.5}(r^2),\quad & i\leq k, 1\leq j\leq 2i+1.
\end{align}
$\mathcal{L}_i^j$ is the (scaled) generalized Laguerre polynomial satisfying $\int_{\R{+}}e^{-x}x^\alpha \mathcal{L}_i^\alpha(x)\mathcal{L}_j^\alpha(x) = \delta_{i,j}$ \cite{Abramowitz_1964,shen2011spectral}.
\end{definition}
\begin{lemma}
The Spherical Laguerre polynomials are orthonormal on $\R3$ w.r.t. the Maxwellian weighted inner product, i.e. 
$$
\int\limits_{\R3}e^{-|\vvec|^2} \Phi_{k,i,j}^t\Phi_{k',i',j'}^{t'}\,dv = \delta_{k,k'}\delta_{i,i'}\delta_{j,j'}\delta_{t,t'}.
$$
\label{lemma:orthogonality_sl}
\end{lemma}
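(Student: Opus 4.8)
The plan is to pass to spherical coordinates $(r,\theta,\varphi)$ and exploit the product structure of $\Phi_{k,i,j}^t$, which factors into an angular part (a real Spherical Harmonic) and a radial part (a power of $r$ times an associated Laguerre polynomial in $r^2$). With $dv = r^2\sin\theta\,dr\,d\theta\,d\varphi$ and $|\vvec|^2 = r^2$, the weighted inner product separates into a product of a surface integral over $S^2$ and a one dimensional radial integral. The first step is therefore to write, for two Spherical Laguerre polynomials whose Spherical Harmonic degrees are $\ell$ and $\ell'$ (where $\ell = 2i$ or $2i+1$ according to the parity of the first index, and likewise for $\ell'$),
\begin{equation*}
\intrd e^{-|\vvec|^2}\Phi_{k,i,j}^t\Phi_{k',i',j'}^{t'}\,d\vvec = 2\left(\int\limits_{S^2}Y_\ell^{j,t}Y_{\ell'}^{j',t'}\,d\Omega\right)\left(\int\limits_0^\infty e^{-r^2}r^{\ell+\ell'+2}\mathcal{L}_{k-i}^{\ell+0.5}(r^2)\mathcal{L}_{k'-i'}^{\ell'+0.5}(r^2)\,dr\right).
\end{equation*}

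Second, I would invoke the orthonormality of the real Spherical Harmonics \cite{gumerov2004}, namely $\int_{S^2}Y_\ell^{j,t}Y_{\ell'}^{j',t'}\,d\Omega = \delta_{\ell,\ell'}\delta_{j,j'}\delta_{t,t'}$, to annihilate the angular factor unless $\ell=\ell'$, $j=j'$ and $t=t'$. This step also disposes of all cross terms between the even degree and the odd degree families for free: those carry Spherical Harmonic degrees of opposite parity, so $\ell\neq\ell'$ and the angular integral already vanishes. Once $\ell=\ell'$ is enforced, the common parity of the first index together with the relation $\ell=2i$ (respectively $\ell=2i+1$) immediately forces $i=i'$.

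Third, with $\ell=\ell'$ and $i=i'$ now fixed, I would reduce the radial integral to the Laguerre orthonormality relation by the substitution $x=r^2$, $dx = 2r\,dr$. Then $r^{\ell+\ell'+2}=r^{2\ell+2}=x^{\ell+1}$, and the prefactor $2$ combined with $dr = dx/(2\sqrt{x})$ produces exactly the weight $e^{-x}x^{\ell+0.5}$; since the Laguerre order is $\ell+0.5$, the radial integral collapses to $\int_0^\infty e^{-x}x^{\ell+0.5}\mathcal{L}_{k-i}^{\ell+0.5}(x)\mathcal{L}_{k'-i}^{\ell+0.5}(x)\,dx = \delta_{k-i,\,k'-i} = \delta_{k,k'}$ by the orthonormality of the scaled associated Laguerre polynomials stated in the definition. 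Collecting the Kronecker deltas $\delta_{k,k'}\delta_{i,i'}\delta_{j,j'}\delta_{t,t'}$ then yields the claim.

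The only genuinely delicate point, and indeed the reason the exponents $\ell+0.5$ were built into the definition, is the bookkeeping in this last step: one must verify that the power of $x$ produced by the volume element $r^2\,dr$, by the two radial factors $r^\ell$, and by the Jacobian of $x=r^2$ combines to precisely $x^{\ell+0.5}$, matching the half integer Laguerre weight. I expect this exponent matching to be the main thing to check carefully; the angular contribution is immediate from the stated orthonormality of the real Spherical Harmonics.
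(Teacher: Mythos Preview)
Your proposal is correct and follows essentially the same approach as the paper: pass to spherical coordinates, separate into an angular integral (handled by the orthonormality of the real Spherical Harmonics) and a radial integral (reduced via $x=r^2$ to the associated Laguerre orthonormality), with the even--odd cross terms vanishing because the Spherical Harmonic degrees have opposite parity. The paper treats the even and odd $k$ cases separately rather than unifying them through your $\ell$ notation, but the substance of the argument is identical.
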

\begin{proof}
We start considering the case of even $k$ and $k'$. For simplicity we write $2k$ and $2k'$.
Using spherical coordinates, $\vvec = r\evec$ gives
\begin{align*}
\int\limits_{\R3}\!\!e^{-|\vvec|^2} &\Phi^t_{2k,i,j}\Phi^{t'}_{2k',i',j'}\,d\vvec \\&= 2\!\!\int\limits_0^\infty\int\limits_{S^2} \!\!e^{-r^2} r^{2i+2i'+2}\mathcal{L}_{k-i}^{2i+0.5}(r^2)\mathcal{L}_{k'-i'}^{2i'+0.5}(r^2)Y_{2i}^{j,t}(\evec)Y_{2i'}^{j',t'}\!\!(\evec)d\evec dr \\
&=2\delta_{i,i'}\delta_{j,j'}\delta_{t,t'} \int\limits_0^\infty e^{-r^2}r^2 r^{4i}\mathcal{L}_{k-i}^{2i+0.5}(r^2)\mathcal{L}_{k'-i}^{2i+0.5}(r^2)dr \\
& \stackrel{r^2=\tilde{r}}{=}\delta_{i,i'}\delta_{j,j'}\delta_{t,t'}\int\limits_0^\infty e^{-\tilde{r}}\tilde{r}^{2i+0.5}\mathcal{L}_{k-i}^{2i+0.5}(\tilde{r})\mathcal{L}_{k'-i}^{2i+0.5}(\tilde{r})d\tilde{r} \\
& = \delta_{k,k'}\delta_{i,i'}\delta_{j,j'}\delta_{t,t'}.
\end{align*}
The deltas w.r.t. $i$, $j$ and $t$ are due to the Spherical Harmonics forming an orthonormal basis on $S^2$. The delta in $k$ is due to the orthogonality of the generalized Laguerre polynomials $\mathcal{L}_k^\alpha$. For $k$ and $k'$ both odd, the proof is the same. For even $k$ only $Y_{2i}^{j,t}$ arise in $\Phi_{k,i,j}^t$ and $Y_{2i+1}^{j,t}$ arise for odd $k'$. Thus, such combinations -- and vice versa -- yield 0 contribution.
\end{proof}
\begin{lemma}
The Spherical Laguerre polynomials $\Phi^t_{k,i,j},\, k\leq N$ form a basis of $P^N(\R3)$.
\end{lemma}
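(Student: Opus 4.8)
The plan is to follow the same template as the Cylinder Hermite basis lemma: orthogonality hands us linear independence for free, so it only remains to verify that every $\Phi^t_{n,i,j}$ is a genuine polynomial of the claimed total degree, and then to count the functions and match the count with $\dim P^N(\R3)=\tfrac{1}{6}(N+1)(N+2)(N+3)$. First I would invoke Lemma \ref{lemma:orthogonality_sl}: since the Spherical Laguerre polynomials are orthonormal, they are linearly independent, so a basis will follow as soon as we know they all lie in $P^N(\R3)$ and that there are exactly $\dim P^N(\R3)$ of them.

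The crucial step is to show that each $\Phi^t_{n,i,j}$ (with first index $n$ written as $2k$ or $2k+1$ in the definition) is a Cartesian polynomial whose total degree equals $n$. The key ingredient is the classical solid-harmonic identity: $r^\ell Y_\ell^{j,t}(\theta,\varphi)$ is a homogeneous (harmonic) polynomial of degree $\ell$ in $\vvec=(\vvec_1,\vvec_2,\vvec_3)$. Applying this with $\ell=2i$, respectively $\ell=2i+1$, converts the factor $r^{2i}Y_{2i}^{j,t}$ into a homogeneous polynomial of degree $2i$, and $r^{2i+1}Y_{2i+1}^{j,t}$ into one of degree $2i+1$. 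The radial factor $\mathcal{L}_{k-i}^{\alpha}(r^2)$ is a polynomial of degree $k-i$ in $r^2=\vvec_1^2+\vvec_2^2+\vvec_3^2$, hence of total degree $2(k-i)$. Multiplying the two, $\Phi_{2k,i,j}^{t}$ acquires total degree $2i+2(k-i)=2k$ and $\Phi_{2k+1,i,j}^{t}$ total degree $(2i+1)+2(k-i)=2k+1$, so that for $n\le N$ indeed $\Phi^t_{n,i,j}\in P^N(\R3)$.

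Finally I would count. For a fixed first index $n$, the admissible angular factors range over all real Spherical Harmonics of degree $n,n-2,n-4,\dots$ down to $0$ or $1$, and for each fixed harmonic degree $\ell$ there are exactly $2\ell+1$ of them (combining the $\cos$ and $\sin$ variants). Summing the arithmetic progression yields $\binom{n+2}{2}=\tfrac{1}{2}(n+1)(n+2)$ polynomials of degree exactly $n$, which is precisely the dimension of the homogeneous polynomials of degree $n$ in three variables. Summing over $n=0,\dots,N$ gives $\sum_{n=0}^{N}\binom{n+2}{2}=\binom{N+3}{3}=\tfrac{1}{6}(N+1)(N+2)(N+3)=\dim P^N(\R3)$. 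Having exhibited $\dim P^N(\R3)$ linearly independent elements of $P^N(\R3)$, they must form a basis.

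The hard part is making the solid-harmonic fact fully rigorous: one must check that the parity of the harmonic degree matches the parity of the radial power ($2i$ paired with $r^{2i}$, $2i+1$ with $r^{2i+1}$) so that the angular factor combines with $\mathcal{L}_{k-i}^{\alpha}(r^2)$ into an honest polynomial with no surviving odd or half-integer powers of $r$. Once this is pinned down, both the degree bookkeeping and the dimension count are entirely routine.
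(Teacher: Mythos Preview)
Your proposal is correct and follows essentially the same route as the paper's proof: orthogonality (Lemma~\ref{lemma:orthogonality_sl}) for linear independence, the solid-harmonic observation that $r^{\ell}Y_\ell^{j,t}$ is a homogeneous polynomial of degree $\ell$ (so $\Phi^t_{n,i,j}\in P^N(\R3)$ with total degree $n$), and the dimension count $\sum_{n=0}^{N}\tfrac{(n+1)(n+2)}{2}=\tfrac{1}{6}(N+1)(N+2)(N+3)$. Your write-up is in fact more careful than the paper's, which glosses over the solid-harmonic step and simply asserts the polynomial degree.
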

\begin{proof}
Since $Y^{j,t}_{2i}$ and $Y^{j,t}_{2i+1}$ are polynomials of total degree $2i$ and $2i+1$, multiplication with $\mathcal{L}^{2i+0.5}_{k-i}$ and $\mathcal{L}^{2i+1.5}_{k-i}(r^2)$ gives a polynomial of total degree $2k$ and $2k+1$ w.r.t. $\vvec$. So each $\Phi^t_{k,i,j} \in P^N(\R3)$.
The linear independence follows from lemma \ref{lemma:orthogonality_sl}. For a fixed $k$, there are $\tfrac{(k+1)(k+2)}{2}$ polynomials. 
Summing $k$ from 0 to $N$ yields $\tfrac{1}{6}(N+1)(N+2)(N+3)$ linear independent polynomials.
\end{proof}
Similar to the proof of lemma \ref{lemma:orthogonality_sl} one obtains 
\begin{lemma}
The Spherical Laguerre polynomials are orthogonal w.r.t. the \\Maxwellian weighted inner collision operator, i.e.
\begin{equation*}
Q^I(e^{-|\vhat|^2}\Phi^t_{k,i,j},\Phi^{t'}_{k',i',j'}) = 
d_{k,i,j}\delta_{k,k'}\delta_{i,i'}\delta_{j,j'}.
\end{equation*}
The constant $d_{k,i,j}$ is
\begin{equation*}
d_{k,i,j} = \begin{cases}
2\pi\int\limits_{-1}^1 b_\theta(\mu)(P_{2i}(\mu)-1)d\mu & k\in 2\mathbb{N} \\ 
2\pi\int\limits_{-1}^1 b_\theta(\mu)(P_{2i+1}(\mu)-1)d\mu
& k\in 2\mathbb{N}+1\end{cases}.
\end{equation*}
\end{lemma}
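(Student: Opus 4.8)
The plan is to substitute the two arguments directly into the definition of the inner collision operator $Q^I$ and to pass to spherical coordinates $\vhat = r\evec$, $\evec \in S^2$. With this substitution the angular kernel simplifies, $b_\theta\!\left(\tfrac{\vhat\cdot\evec'}{|\vhat|}\right) = b_\theta(\evec\cdot\evec')$, while both the weighted density and the test function split into a radial factor times a Spherical Harmonic, say $\Phi^t_{k,i,j}(r\evec) = R_{k,i}(r)\,Y^{j,t}_{\ell}(\evec)$ with $\ell = 2i$ when $k$ is even and $\ell = 2i+1$ when $k$ is odd. The decisive structural observation is that the post-collisional evaluation $\Phi^{t'}_{k',i',j'}(\evec'|\vhat|)$ leaves the radius $|\vhat| = r$ untouched and only replaces the direction $\evec$ by $\evec'$; therefore the bracket equals $R_{k',i'}(r)\,[\,Y^{j',t'}_{\ell'}(\evec')-Y^{j',t'}_{\ell'}(\evec)\,]$, the radial and angular dependencies separate completely, and the triple integral factorizes into a one-dimensional radial integral times a double integral over $S^2\times S^2$.

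For the angular factor I would invoke the Funk--Hecke theorem on $S^2$: for any zonal kernel and any Spherical Harmonic of degree $\ell'$ one has $\int_{S^2} b_\theta(\evec\cdot\evec')\,Y^{j',t'}_{\ell'}(\evec')\,d\evec' = \lambda_{\ell'}\,Y^{j',t'}_{\ell'}(\evec)$ with $\lambda_{\ell'} = 2\pi\int_{-1}^{1} b_\theta(\mu)P_{\ell'}(\mu)\,d\mu$. Applying this to the first term of the bracket, and using the special case $\ell=0$, $P_0\equiv 1$ for the subtracted term (which integrates to $\lambda_0 = 2\pi\int_{-1}^{1}b_\theta(\mu)\,d\mu$ times $Y^{j',t'}_{\ell'}(\evec)$), the inner $\evec'$-integration collapses to $2\pi\int_{-1}^{1} b_\theta(\mu)\bigl(P_{\ell'}(\mu)-1\bigr)\,d\mu$ multiplied by $Y^{j',t'}_{\ell'}(\evec)$. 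This scalar is exactly the claimed constant $d_{k,i,j}$, with $\ell' = 2i'$ or $2i'+1$ according to the parity of $k'$. Integrating the remaining product $Y^{j,t}_{\ell}(\evec)Y^{j',t'}_{\ell'}(\evec)$ over $S^2$ and using orthonormality of the Spherical Harmonics then yields $\delta_{\ell,\ell'}\delta_{j,j'}\delta_{t,t'}$; since equal spherical degrees force $k$ and $k'$ to have the same parity, $\delta_{\ell,\ell'}$ becomes $\delta_{i,i'}$.

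For the radial factor I would reuse, essentially verbatim, the computation already carried out in the proof of lemma \ref{lemma:orthogonality_sl}: on the support $i=i'$ fixed by the angular step, the substitution $\tilde r = r^2$ turns the radial integral into $\int_0^{\infty} e^{-\tilde r}\,\tilde r^{\,2i+0.5}\,\mathcal{L}_{k-i}^{2i+0.5}(\tilde r)\,\mathcal{L}_{k'-i}^{2i+0.5}(\tilde r)\,d\tilde r$, which equals $\delta_{k,k'}$ by orthonormality of the generalized Laguerre polynomials. Multiplying the angular and radial factors produces the asserted diagonal form $d_{k,i,j}\delta_{k,k'}\delta_{i,i'}\delta_{j,j'}$ (together with $\delta_{t,t'}$). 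The only genuinely new ingredient relative to lemma \ref{lemma:orthogonality_sl} is the angular step, so I expect the main obstacle to be the clean application of Funk--Hecke: specifically, recognizing that the collision geometry preserves $|\vhat|$ so that the radial Laguerre weights never couple into the $S^2\times S^2$ integration, and keeping the bookkeeping of the degree $\ell = 2i$ versus $\ell = 2i+1$ consistent with the parity of $k$.
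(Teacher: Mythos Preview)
Your proposal is correct and follows essentially the same route as the paper: pass to spherical coordinates, factor into a radial integral times a double $S^2\times S^2$ integral, apply Funk--Hecke to collapse the angular bracket into $(\lambda_{\ell'}-\lambda_0)Y^{j',t'}_{\ell'}(\evec)$, then use Spherical Harmonic orthonormality and Laguerre orthogonality for the radial factor exactly as in Lemma~\ref{lemma:orthogonality_sl}. Your explicit remark that the collision map preserves $|\vhat|$ is the structural reason the factorization works, and your parity bookkeeping (equal $\ell$ forces equal parity of $k,k'$, then the radial $\delta$ upgrades this to $\delta_{k,k'}$) is precisely what the paper leaves implicit when it treats the even/odd cases separately.
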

\begin{proof}
We consider again the case of even $k$ and $k'$ and again write $2k$ and $2k'$ instead of $k$ and $k'$. As in the proof of lemma \ref{lemma:orthogonality_sl}, $\vhat$ is transformed to spherical coordinates.
\begin{align*}
Q^I(e^{-|\vhat|^2}\Phi^t_{k,i,j},\Phi^{t'}_{k',i',j'}) = &\;2\!\!\int\limits_0^\infty\!\! e^{-r^2}r^{2+2i+2i'}\mathcal{L}_{k-i}^{2i+0.5}(r^2)\mathcal{L}_{k'-i'}^{2i'+0.5}(r^2)dr \\\times &\int\limits_{S^2}\int\limits_{S^2}b_\theta(\evec\cdot \evec')Y_{2i}^{j,t}(\evec)(Y_{2i'}^{j',t'}(\evec')-Y_{2i'}^{j',t'}(\evec))d\evec'd\evec.
\end{align*}
The surface integral can be rewritten as
\begin{align*}
\int\limits_{S^2}&\int\limits_{S^2}b_\theta(\evec\cdot \evec')Y_{2i}^{j,t}(\evec)(Y_{2i'}^{j',t'}(\evec')-Y_{2i'}^{j',t'}(\evec))d\evec'd\evec \\ &= \underbrace{\int\limits_{S^2}\!\!\left(\;\int\limits_{S^2} b_\theta(\evec\cdot \evec')Y_{2i'}^{j',t'}\!(\evec')d\evec'\right)\!\!Y_{2i}^{j,t}(\evec)d\evec}_{:=A} - \underbrace{\int\limits_{S^2}\!\!\left(\;\int\limits_{S^2} b_\theta(\evec\cdot \evec')d\evec'\right)\!\!Y_{2i'}^{j',t'}\!(\evec)Y_{2i}^{j,t}(\evec)d\evec}_{:=B}.
\end{align*}
Using the Funk-Hecke theorem \cite{gumerov2004} we
evaluate $A$ to
\begin{equation*}
A = \lambda_{2i'}\int\limits_{S^2}Y_{2i'}^{j',t'}(\evec)Y_{2i}^{j,t}(\evec)d\evec = \lambda_{2i'}\delta_{i,i'}\delta_{j,j'}\delta_{t,t'},\; \text{ with } \; \lambda_{2i'} = 2\pi \int\limits_{-1}^1b_\theta(\mu)P_{2i'}(\mu)d\mu.
\end{equation*}
For $B$ we obtain
\begin{equation*}
B = \lambda_0\delta_{i,i'}\delta_{j,j'}\delta_{t,t'}.
\end{equation*}
Combining the values for $A$ and $B$ and using as before the orthogonality of the Laguerre polynomials completes the proof.
\end{proof}
\subsection{Transforming to a sparse inner collision operator}
\label{sec:transformations}
In the present subsection we write $f$ instead of $f^{\nicefrac{\vbar}{\sqrt{2}}}_2$ to simplify notation. We assume that this function is given as an expansion to Lagrange polynomials $L^{(N)}_j$ with collocation nodes $\frac{1}{\sqrt{2}}\vvec_j$, where $\vvec_j$ is the $j-$th Gauss Hermite node.
\subsubsection{Nodal to Hermite representation}
\label{sec:trafo_n_to_h}
We use $\sum\limits_{l,m,n}$ to denote $\sum\limits_{l=0}^{2N}\sum\limits_{m=0}^{2N}\sum\limits_{n=0}^{2N}$, as well as $\sum\limits_{i,j,k}$ to denote $\sum\limits_{i=0}^N\sum\limits_{j=0}^{N-i}\sum\limits_{k=0}^{N-i-j}$.
Transforming nodal to hierarchical requires

\begin{equation*}
f(\tfrac{\vvec}{\sqrt{2}}) = e^{-|\vvec|^2}\!\! \sum\limits_{l,m,n}c_{l,m,n}l_l(\frac{\vvec_1}{\sqrt{2}})l_m(\frac{\vvec_2}{\sqrt{2}})l_n(\frac{\vvec_3}{\sqrt{2}}) = e^{-|\vvec|^2}\sum\limits_{i,j,k}h_{i,j,k}h_i(\vvec_1)h_j(\vvec_2)h_k(\vvec_3),
\end{equation*}
where the equal sign has to be understood in terms of an $L_2$ projection.
Projecting onto $\myspan\{H_{i,j,k}:i+j+k\leq N\}$ gives
\begin{equation*}
h_{i',j',k'} = \sum\limits_{l,m,n} c_{l,m,n}\intrd e^{-|v|^2}h_{i'}(\vvec_1)h_{j'}(\vvec_2)h_{k'}(\vvec_3)l_l(\frac{\vvec_1}{\sqrt{2}})l_m(\frac{\vvec_2}{\sqrt{2}})l_n(\frac{\vvec_3}{\sqrt{2}})\,d\vvec.
\end{equation*}
We define the 1D projection matrix $\mathcal{P}^{L\rightarrow H} \in \R{(N+1)\times (2N+1)}$, with \\$\mathcal{P}^{L\rightarrow H}_{i,j} := \int e^{-v^2}h_i(v)l_j(\frac{v}{\sqrt{2}})dv$ and obtain
\begin{equation*}
h_{i',j',k'} = \sum\limits_{l,m,n} c_{l,m,n}\mathcal{P}^{L\rightarrow H}_{i',l}
\mathcal{P}^{L\rightarrow H}_{j',m}
\mathcal{P}^{L\rightarrow H}_{k',n} = 
\sum\limits_{l} \mathcal{P}^{L\rightarrow H}_{i',l}
\underbrace{\sum\limits_{m}\mathcal{P}^{L\rightarrow H}_{j',m}
\underbrace{\sum\limits_{n}c_{l,m,n} \mathcal{P}^{L\rightarrow H}_{k',n}}_{:=h^1_{l,m,k'}}}_{:=h^2_{l,j',k'}}.
\end{equation*}
We evaluate the triple sum via 3 single sums, corresponding to 3 1D transformations. Each of them requires $N^4$ operations for evaluation, giving $\mathcal{O}(N^4)$ complexity.
Additionally, these transforms are executed as matrix multiplications, using efficient Lapack routines \cite{laug}.
\par
The entries of the transformation matrix evaluate to
\begin{equation*}
\mathcal{P}^{L\rightarrow H}_{i,j}=\int e^{-v^2}h_i(v)l_j(\frac{v}{\sqrt{2}})dv = \sum\limits_\text{ip} \omega_{\text{ip}}h_i(v_\text{ip})l_j(\frac{v_\text{ip}}{\sqrt{2}}) = \omega_j h_i(v_j).
\end{equation*}
\subsubsection{Hermite to Cylinder Hermite representation}
In this subsection we use $\sum\limits_{k,i,j,t}$ to denote $\sum\limits_{k=0}^N\sum\limits_{i=0}^k\sum\limits_{j=0}^{\lfloor 0.5 i\rfloor}\sum\limits_{t \in \{\cos,\sin\}}$. Note that we do not overload notation by explicitly deducing the cases ($j=0 \wedge i\in 2\mathbb{N}$), where no $\sin$ basis polynomial exists.
Transforming to Cylinder Hermite polynomials requires 
\begin{equation*}
f(\tfrac{\vvec}{\sqrt{2}}) = e^{-|\vvec|^2}\sum\limits_{i+j+k\leq N}h_{i,j,k}h_i(\vvec_1)h_j(\vvec_2)h_k(\vvec_3) = e^{-|\vvec|^2}\sum\limits_{k,i,j,r}\ph_{k,i,j,t}\PH_{k,i,j}^t(\vvec).
\end{equation*}
Projecting the Hermite polynomial representation onto the Cylinder Hermite polynomials results in
\begin{equation*}
\sum\limits_{i+j+k\leq N}h_{i,j,k}\intrd e^{-|\vvec|^2}H_{i,j,k}(\vvec)\PH_{k',i',j'}^{t'}(\vvec) \,d\vvec  = \ph_{k',i',j',t'}.
\end{equation*}
On the left side we obtain:
\begin{align*}
\intrd e^{-|\vvec|^2}H_{i,j,k}(\vvec)\PH_{k',i',j'}^{t'}(\vvec) \,d\vvec = &
\overbrace{\int\limits_{\R{}}e^{-\vvec_3^2}h_k(\vvec_3)h_{k'-i'}(\vvec_3)d\vvec_3}^{=\delta_{k,k'-i'}} \times \\ &\underbrace{\int\limits_{\R2} e^{-\vvec_1^2-\vvec_2^2}h_i(\vvec_1)h_j(\vvec_2)\Psi_{i',j'}^{t'}((\vvec_1,\vvec_2)) \,d(\vvec_1,\vvec_2)}_{:=\delta_{i+j,i'} p^{H\to\PH}_{i,j,i',j',t'}}
\end{align*}
The two deltas can be written as $\delta_{k,k'-i'}\delta_{i+j,i'} = \delta_{i+j+k,k'}\delta_{i+j,i'}$. Consequently, only equal degree polynomials ($i+j+k$ and $k'$) interact. The second delta $\delta_{i+j,i'}$ is discussed in \cite{Kitzler2015,Kitzler2016}. Again, only equal degree polynomials w.r.t. $(\vvec_1,\vvec_2)$ interact ($i+j$ and $i'$). Using the deltas we can write
\begin{equation*}
\sum\limits_{i=0}^{i'} h_{i,i'-i,k'-i'} \mathcal{P}^{H\to\PH}_{i,i'-i,i',j',t'} = \ph_{k',i',j',t'}, \quad j'=0\dots\lfloor 0.5 i\rfloor, i'=0\ldots k',k'=0\ldots N.
\end{equation*}
This gives a very structured transformation from Hermite to Cylinder Hermite polynomials: We consider a fixed $0\leq k'\leq N$ and $0\leq i'\leq k'$ and denote
$\mathbf{h}_{k',i'} := (h_{0,i',k'-i'},h_{1,i'-1,k'-i'},\ldots h_{i',0,k'-i'})^t$ and $\mathbf{\ph}_{k',i'}:=(\mathbf{\ph}_{k',i',\cos},\mathbf{\ph}_{k',i',\sin})^t$ with 
$\mathbf{\ph}_{k',i',t'} := (\ph_{k',i',0,t'},\ph_{k',i',1,t'},\ldots \ph_{k',i',\lfloor 0.5 i\rfloor,t'})^t$. Let us now define the projection matrix $\mathcal{P}^{H\to\PH}_{i'}\in \R{(i'+1)\times (i'+1)}$ with
\begin{equation*}
\mathcal{P}^{H\to\PH}_{i'}:=\left(\begin{matrix}\mathcal{P}^{H\to\PH}_{i',\cos} \\ \mathcal{P}^{H\to\PH}_{i',\sin}\end{matrix}\right), \quad\quad\!\!
\mathcal{P}^{H\to\PH}_{i',t'} := \left(
\begin{matrix}
p^{H\to\PH}_{0,i',i',0,t'} &\ldots & p^{H\to\PH}_{i',0,i',0,t'} \\
p^{H\to\PH}_{0,i',i',1,t'} &\ldots & p^{H\to\PH}_{i',0,i',1,t'} \\
\vdots & \vdots & \vdots \\
p^{H\to\PH}_{0,i',i',\lfloor 0.5 i\rfloor ,t'}  &\ldots & p^{H\to\PH}_{i',0,i',\lfloor 0.5 i\rfloor,t'}
\end{matrix}
\right)
\end{equation*}
to write the transformation as $\mathbf{\ph}_{k',i'} = \mathcal{P}^{H\to\PH}_{i'} \mathbf{h}_{k',i'},\, 0\leq i'\leq k',0\leq k'\leq N$.
The matrix $\mathcal{P}^{H\to\PH}_{i'}$ is independent of $k'$ such that it can be recycled for different total polynomial degrees $k'$. Thus, we arrange the Hermite and Cylinder Hermite coefficients as 
\begin{equation*}
\mathbf{H} =
\left(\begin{matrix}
\mathbf{h}_{0,0} & \mathbf{h}_{1,0} & \mathbf{h}_{2,0} &\ldots &\mathbf{h}_{N,0} \\
0                & \mathbf{h}_{1,1} & \mathbf{h}_{2,1} &\ldots &\mathbf{h}_{N,1} \\
0                & 0                & \mathbf{h}_{2,2} &\ldots &\mathbf{h}_{N,2} \\
\vdots			 & \vdots			& \vdots 		   & \vdots& \vdots \\
0                &                  & \ldots           & 0     &\mathbf{h}_{N,N} \\
\end{matrix}\right)\quad\!\!
\mathbf{\Theta} =
\left(\begin{matrix}
\mathbf{\ph}_{0,0} & \mathbf{\ph}_{1,0} & \mathbf{\ph}_{2,0} &\ldots &\mathbf{\ph}_{N,0} \\
0                & \mathbf{\ph}_{1,1} & \mathbf{\ph}_{2,1} &\ldots &\mathbf{\ph}_{N,1} \\
0                & 0                & \mathbf{\ph}_{2,2} &\ldots &\mathbf{\ph}_{N,2} \\
\vdots			 & \vdots			& \vdots 		   & \vdots& \vdots \\
0                &                  & \ldots           & 0     &\mathbf{\ph}_{N,N} \\
\end{matrix}\right),
\end{equation*}
and define the block diagonal matrix $\mathcal{P}^{H\to\PH}:=\mathrm{diag}(\mathcal{P}^{H\to\PH}_0,\ldots \mathcal{P}^{H\to\PH}_N)$ (see figure \ref{fig:block_diagonal_trafo_h_ph}) to obtain
\begin{equation*}
\mathbf{\Theta}=\mathcal{P}^{H\to\PH}\mathbf{H}.
\end{equation*}
The required operations for the transformation for a fixed $(k',i')$ is $(i'+1)^2$. Thus, summing over $i'=0\ldots k'$ and $k' = 0\ldots N$ results in $\mathcal{O}(N^4)$ operations. The arising matrix multiplications are again performed with highly optimized Lapack routines \cite{laug}.
\begin{figure}
\centering
\includegraphics[scale=0.65]{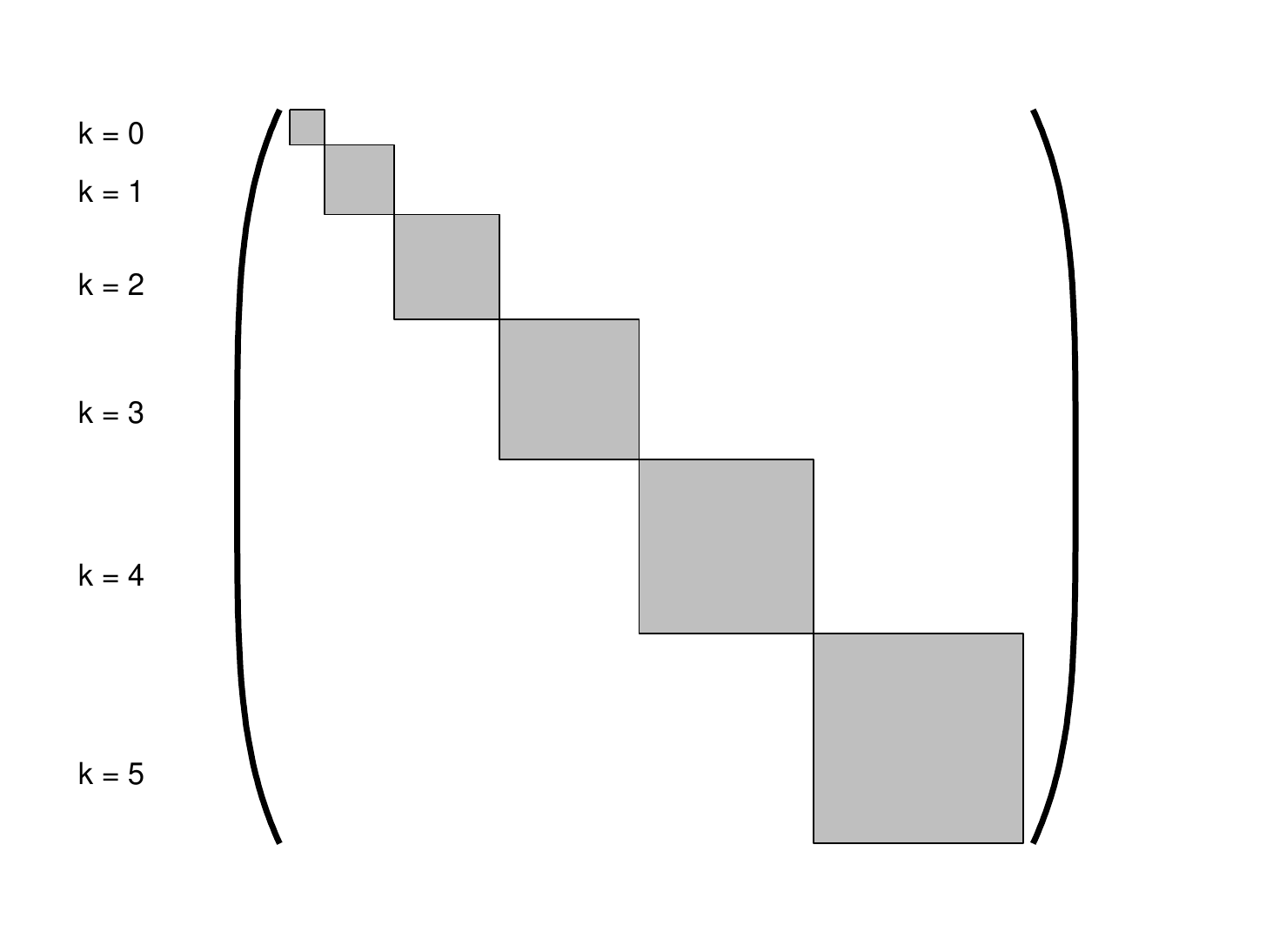}
\caption{The structure of the projection matrix $\mathcal{P}^{H\to\PH}$. The gray shaded blocks are the only non zero entries, the $i'$-th block is of size $(i'+1) \times (i'+1)$.}
\label{fig:block_diagonal_trafo_h_ph}
\end{figure}
\subsubsection{Cylinder Hermite to Spherical Laguerre representation}
As before we use the shorthand notation $\sum\limits_{k,i,j,t}$ to denote 
\begin{equation*}
\sum\limits_{k=0}^N\sum\limits_{i=0}^k\sum\limits_{j=0}^{\lfloor 0.5 i\rfloor}\sum\limits_{t \in \{\cos,\sin\}},
\end{equation*}
the sum over the Cylinder Hermite polynomials. Additionally we use $\!\!\sum\limits_{k',i',j',t'}\!\!\!$ instead of 
\begin{equation*}
\sum\limits_{k'=0}^N\sum\limits_{i'=0}^{k'}\sum\limits_{j'=0}^{2 i' + I_{2\mathbb{N}}(k')}\sum\limits_{t' \in \{\cos,\sin\}}
\end{equation*}
for the sum over the Spherical Laguerre polynomials. $I_{2\mathbb{N} }(k')$ is the indicator function of the set $2\mathbb{N}$ at $k'$. We require
\begin{equation*}
f(\tfrac{\vvec}{\sqrt{2}}) = e^{-|v|^2}\sum\limits_{k,i,j,t}\ph_{k,i,j}^t\PH_{k,i,j}^t(\vvec) = e^{-|v|^2}\sum\limits_{k',i',j',t'} \phi_{k',i',j'}^{t'} \Phi_{k',i',j'}^{t'}(\vvec).
\end{equation*}
Projecting onto the Spherical Laguerre polynomials results in 
\begin{equation*}
\sum\limits_{k,i,j,t}\ph_{k,i,j}^t\underbrace{\intrd e^{-|v|^2}\PH_{k,i,j}^t(\vvec)\Phi_{k',i',j'}^{t'}(\vvec)\,d\vvec}_{:=\mathcal{P}^{\PH\to \Phi}_{(k',i'),(i,j)}} = \phi_{k',i',j'}^{t'}.
\end{equation*}
Since the Cylinder Hermite and the Spherical Laguerre bases are orthogonal w.r.t. the same inner product and are hierarchical, the only interaction is when $k' = k$. Accordingly we can simplify the left hand side to
\begin{equation}
\sum\limits_{i,j,t}\ph_{k',i,j}^t\intrd e^{-|v|^2}\PH_{k',i,j}^t(\vvec)\Phi_{k',i',j'}^{t'}(\vvec)\,d\vvec = \phi_{k',i',j'}^{t'}.
\label{eq:sum_cylinderhermite}
\end{equation}
Spherical coordinates for $\vvec$ and $x^2+y^2 = r^2\sin^2(\theta)$, as well as the abbreviation $2i+(k\mod 2)=:K(i,k)$ transform the integral in \refer{eq:sum_cylinderhermite} into
\begin{align*}
\mathcal{P}^{\PH\to \Phi}_{(k',i'),(i,j)}=\,&\delta_{j',I(i,j)}\delta_{t,t'} \int\limits_{\R+}\int\limits_{[-\pi,\pi]}e^{-|r|^2}(r\sin(\theta))^{I(i,j)}\mathcal{L}_{\frac{i-I(i,j)}{2}}^{I(i,j)}(r^2\sin^2(\theta))\times \\ & h_{k'-i}(r\cos(\theta)) P^{j'}_{K(i',k')}(\cos(\theta)) r^{K(i',k')}\mathcal{L}_{k'-i'}^{K(i',k')+0.5}(r^2)r^2\sin(\theta) \,d\theta dr.
\end{align*}
The deltas in the above integral simplify the sum in \refer{eq:sum_cylinderhermite} to
\begin{equation*}
\phi_{k',i',j'}^{t'} = \sum\limits_i\ph_{k',i,\tfrac{j'-(i\!\!\!\!\mod 2)}{2}}^{t'} \mathcal{P}^{\PH\to \Phi}_{(k',i'),(i,\tfrac{j'-(i\!\!\!\!\mod 2)}{2})}.
\end{equation*}
We need to  specify the range for $i$, $i'$ and $j'$. From $\delta_{j',I(i,j)}$ we deduce that for non vanishing integrals either $j'\in 2\mathbb{N} \wedge i\in 2\mathbb{N}$ or $j'\in 2\mathbb{N}+1 \wedge i\in 2\mathbb{N}+1$.
Using $I(i,j) = j'$ we obtain the bounds for $i$:
\begin{equation*}
i=0\ldots k' \wedge j=0\ldots \lfloor 0.5i\rfloor \Leftrightarrow j=0\ldots \lfloor 0.5k'\rfloor \wedge i=j'\ldots k'.
\end{equation*}
The bounds for the $j'$ and $i'$ result in 
\begin{equation*}
i'=0\ldots \lfloor \frac{k'}{2}\rfloor \wedge j'=0\ldots K(i',k') \Leftrightarrow j'=0\ldots k' \wedge i'= \lfloor \frac{j'-k\!\!\!\!\mod 2+1}{2}\rfloor\ldots \lfloor \frac{k'}{2} \rfloor.
\end{equation*}
With these two equations, we can state the transformation, which -- for $k'\in 2\mathbb{N}$ -- is 
\begin{align*}
\phi^{\cos}_{k',i',0} & = \sum\limits_{i=0}^{\frac{k'}{2}}\ph_{k',2i,0}^{\cos}\mathcal{P}^{\PH\to \Phi}_{(k',i'),(2i,0)}, \quad i' = 0\ldots \frac{k'}{2} \\
\text{for } j' = 1\ldots & \frac{k'}{2}:\\
\phi^{t'}_{k',i',2j'-1} & = \sum\limits_{i=j'}^{\frac{k'}{2}}\ph_{k',2i-1,j'-1}^{t'}\mathcal{P}^{\PH\to \Phi}_{(k',i'),(2i-1,j'-1)}, \quad t'\in\{\cos,\sin\},\;i' = j'\ldots \frac{k'}{2} \\
\phi^{t'}_{k',i',2j'} & = \sum\limits_{i=j'}^{\frac{k'}{2}}\ph_{k',2i,j'}^{t'}\mathcal{P}^{\PH\to \Phi}_{(k',i'),(2i,j')}, \quad t'\in\{\cos,\sin\},\;i' = j'\ldots \frac{k'}{2}.
\end{align*}
This is a block diagonal matrix, with the first block of size $(\nicefrac{k'}{2}+1)\times (\nicefrac{k'}{2}+1)$ followed by 4 blocks of size $(\nicefrac{k'}{2}-j+1)\times (\nicefrac{k'}{2}-j+1), j=1\ldots \nicefrac{k'}{2}$.
For odd $k'$ we arrive at
\begin{align*}
\phi^{\cos}_{k',i',0} & = \sum\limits_{i=0}^{\frac{k'-1}{2}}\ph_{k',2i,0}^{\cos}\mathcal{P}^{\PH\to \Phi}_{(k',i'),(2i,0)}, \quad i' = 0\ldots \frac{k'-1}{2} \\
\phi^{t'}_{k',i',1} & = \sum\limits_{i=0}^{\frac{k'-1}{2}}\ph_{k',2i+1,0}^{t'}\mathcal{P}^{\PH\to \Phi}_{(k',i'),(2i+1,0)}, \quad t'\in \{\cos,\sin\},\;i' =0\ldots \frac{k'-1}{2} \\
\text{for } j' = 1\ldots &\frac{k'-1}{2}:\\
\phi^{t'}_{k',i',2j'} & = \sum\limits_{i=j'}^{\frac{k'-1}{2}}\ph_{k',2i,j'}^{t'}\mathcal{P}^{\PH\to \Phi}_{(k',i'),(2i,j')}, \quad t'\in\{\cos,\sin\},\;i' = j'\ldots \frac{k'-1}{2} \\
\phi^{t'}_{k',i',2j'+1} & = \sum\limits_{i=j'}^{\frac{k'-1}{2}}\ph_{k'2i+1,j'}^{t'}\mathcal{P}^{\PH\to \Phi}_{(k',i'),(2i-1,j'-1)}, \quad t'\in\{\cos,\sin\},\;i' = j'\ldots \frac{k'-1}{2}.
\end{align*}
In the beginning, there are 3 blocks of size $\tfrac{k'-1}{2}\times \tfrac{k'-1}{2}$. Then there are again 4 blocks of equal sizes $(\tfrac{k'-1}{2}-j+1)\times (\tfrac{k'-1}{2}-j+1),j=0\ldots \tfrac{k'-1}{2}$.
\par
For a fixed $k'$ the computational work is
$(\frac{k'}{2}+1)^2+4\sum_{j=1}^{\nicefrac{k'}{2}}(\frac{k'}{2}-j+1)^2 = \mathcal{O}(k'^3)$ for even $k'$ and for odd $k'$ this is $3(\frac{k'}{2}+1)^2+4\sum_{j=1}^{\lfloor\nicefrac{k'}{2}\rfloor}(\frac{k'-1}{2}-j+1)^2 = \mathcal{O}(k'^3)$. Summing over $k'=0\ldots N$ gives a transformation of complexity $\mathcal{O}(N^4)$.
\subsubsection{Multiplication with \texorpdfstring{$b_r$}{multwithbr}}
Now that we arrived at $f_2^{(\nicefrac{\vbar}{\sqrt{2}})}$ in the Spherical Laguerre basis, we consider the multiplication with $b_r(r)$. We define $f_{2,\beta}^{(\nicefrac{\vbar}{\sqrt{2}})}(\nicefrac{\vhat}{\sqrt{2}}):=b_r(\sqrt{2}r)f_2^{(\nicefrac{\vbar}{\sqrt{2}})}(\nicefrac{\vhat}{\sqrt{2}})$ and project onto $V_{N}$:
\begin{align}
\intrd e^{-r}(f_{2,r}^{(\frac{\vbar}{\sqrt{2}})})(\tfrac{\vhat}{\sqrt{2}})\Phi^{t'}_{k',i',j'}(\tfrac{\vhat}{\sqrt{2}})\,d\vhat = \intrd e^{-r}b_r(\sqrt{2}r)f_2^{(\frac{\vbar}{\sqrt{2}})}(\tfrac{\vhat}{\sqrt{2}})\Phi^{t'}_{k',i',j'}(\tfrac{\vhat}{\sqrt{2}})\,d\vhat.
\label{eq:multr}
\end{align}
Now, expand $f_2^{(\nicefrac{\vbar}{\sqrt{2}})}(\tfrac{\vhat}{\sqrt{2}}) = \sum \phi^t_{k,i,j}\Phi_{k,i,j}(\vhat)$ and $(f_{2,\beta}^{(\nicefrac{\vbar}{\sqrt{2}})})(\tfrac{\vhat}{\sqrt{2}}) = \sum {\phi^\beta}^t_{k,i,j}\Phi_{k,i,j}(\vhat)$, use $b_r(r) = r^{\beta}$ and plug everything into \refer{eq:multr} to arrive at:
\begin{equation*}
{\phi^\beta}^{t'}_{k',i',j'} = \sum\limits_{k} \phi^{t'}_{k,i',j'} \int\limits_{\R{+}} e^{-r}r^{2i'+0.5(1+\beta)}\mathcal{L}_{k-i'}^{2i+0.5}(r)\mathcal{L}_{k'-i'}^{2i'+0.5}(r)\,dr.
\end{equation*}
The integral on the right hand side is exactly evaluated by Gauss Laguerre quadratures with weight function $\omega(x) = e^{-r}r^{2i'+0.5(1+\beta)}$ \cite{Abramowitz_1964, shen2011spectral}.
\subsection{The transformation of the test functions}
In order to consider the transformation of the test functions, we denote by $\mathcal{P}^{N\to \Phi}:Q^N \to P^N$ the transformation from nodal to hierarchical polynomials. For $f \in P^N \subset Q^N$ there holds for each $\Phi \in P^N$
\begin{equation*}
\int e^{-|\vvec|^2}f(\vvec)\Phi(\vvec)\,d\vvec = \!\int e^{-|\vvec|^2}(\mathcal{P}^{N\to \Phi}f)(\vvec)\Phi(\vvec) \, d\vvec =\! \int e^{-|\vvec|^2}f(\vvec){\mathcal{P}^{{N\to \Phi}}}^*\Phi(\vvec) \, d\vvec. 
\end{equation*}
Thus, testing with Lagrange polynomials is obtained by applying the transposed matrix ${\mathcal{P}^{N\to \Phi}}^t$ to $Q^I(f^{\vbar}_2,\Phivec)$. Since $\mathcal{P}^{N \to \Phi} = \mathcal{P}^{\PH \to \Phi}\mathcal{P}^{H \to \PH}\mathcal{P}^{N \to H}$, the transposed matrix becomes  
\begin{equation*}
{\mathcal{P}^{N\to \Phi}}^t = {\mathcal{P}^{N\to H}}^t {\mathcal{P}^{H\to \PH}}^t{\mathcal{P}^{\PH\to \Phi}}^t.
\end{equation*}
As a consequence it has the same asymptotic computational costs as the forward transformation.
Finally, using the transformation of the test functions we obtain for the inner collision operator $Q^I$
\begin{equation*}
Q^I(f^{\nicefrac{\vbar}{\sqrt{2}}},\Lvec^{\nicefrac{\vbar}{\sqrt{2}}})(\nicefrac{\vbar}{\sqrt{2}}) = {\mathcal{P}^{N\to H}}^t {\mathcal{P}^{H\to \PH}}^t{\mathcal{P}^{\PH\to \Phi}}^t Q^I(f^{\nicefrac{\vbar}{\sqrt{2}}},\Phivec)(\nicefrac{\vbar}{\sqrt{2}}).
\end{equation*}
\subsection{The collision algorithm}
In this section we present the algorithm for the calculation of the collision operator.

\begin{algorithmic}
\STATE $N=\sqrt[3]{\ndofv}-1$
\STATE $(\vbar,\omega) = $ TensorGaussHermiteRule($N+1$)
\STATE $q = 0$
\FORALL{($\vbar$, $\omega$)}
	\STATE $c^{\vbar} = $ Shift$_{\vbar}$($c$)
	\FOR{$j = 0$ to $\ndofv^{(2)}-1$} \STATE $e_j = c^{\vbar}_{j}c^{\vbar}_{\ndofv^{(2)}-1-j}$\ENDFOR
	\STATE $h = $ Nodal2Hermite($c^{\vbar}$)
	\STATE $\ph = $ Hermite2CylinderHermite($h$)
	\STATE $\psi = $ CylinderHermite2SphericalLaguerre($\ph$)
	\STATE $\psi^{\beta} = $ Multr($\psi$)
	\STATE $p_\text{coll} = $ DiagCollision($\psi^{\beta}$)
	\STATE $\ph_\text{coll} = $ CylinderHermite2SphericalLaguerreT($\psi_{\text{coll}}$)
	\STATE $h_\text{coll} = $ Hermite2CylinderHermiteT($\ph_{\text{coll}}$)
	\STATE $n_\text{coll} = $ Nodal2HermiteT($h_{\text{coll}}$)\;
	\STATE $q += \omega\, $ShiftT$_{\vbar}$($n_\text{coll}$)\;
\ENDFOR
\end{algorithmic}
\begin{remark}
Note that testing in $V_N$ is obtained from $\intrd Q(f) \Lvec\,d\vvec$ via 
\begin{equation*}
\intrd Q(f) \Hvec\,d\vvec ={\mathcal{P}^{N\to H}}^{-t}\intrd Q(f) \Lvec\,d\vvec,
\end{equation*}
with $\Hvec$ denoting the vector of the Hermite polynomials.
\end{remark}
To summarize the computational complexity and memory requirements, we note that the complexity for each $\vbar$ is $\mathcal{O}(N^4)$. There are $N^3$ different $\vbar$ nodes such that the complexity is $\mathcal{O}(N^7)$ in total. A reduction of computational costs can be obtained by the use of low order integration rules w.r.t. $\vbar$. 
The complexity of our algorithm in terms of unknows per direction is higher than for Fourier-spectral methods \cite{doi:10.1137/16M1096001}, but the better approximation properties of the weighted Hermite polynomials outweigh the higher effort. This is documented in examples \ref{subsec:maxwell_moments} and  \ref{subsec:hs_moments}.
\par
The storage requirements are $\mathcal{O}(N^3)$ for the shift matrices and for the transformation from Hermite to Cylinder Hermite. $\mathcal{O}(N^2)$ is required for the transformation from Lagrange to Hermite polynomials and finally $\mathcal{O}(N^4)$ for the transformation from Cylinder Hermite to Spherical Laguerre. To take a concrete example, let $N=64$, then only 22 Megabytes of memory are required to store the transformations and the solution vector. This very low memory requirements are a direct consequence of reducing the transformations to several 1d transforms.
\par 
\section{Numerical results}
All our computations were done on Intel(R) Xeon(R) CPU E7-8867 v3 CPUs. The computation times for a single evaluation of the collision operator are documented in table \ref{tab:timings}. We note good scalability when the parallelization is done w.r.t. the $\vbar$ nodes.
\color{black}
\subsection{Maxwell molecules - BKW solution}
In a first example we consider a constant kernel $B(\vvec,\wvec,\evec') = \tfrac{1}{4\pi}$ and an initial condition $f_0(\vvec) = f_{\text{BKW}}(t_0,\vvec)$, where 
\begin{equation}
f_{\text{BKW}}(t,\vvec) = \frac{1}{2(2\pi K(t))^{\nicefrac{3}{2}}}\left(\frac{5K(t)-3}{K(t)} + \frac{1-K(t)}{K(t)^2}|\vvec|^2 \right) e^{-\frac{|\vvec|^2}{2K(t)}}.
\end{equation}
This is one of the few analytically available solutions of the non linear Boltzmann equation \cite{Krupp_1967,MR0398379,Ernst}.
$K(t)$ is given by $K(t) = 1-e^{-\nicefrac{t}{6}}$. To obtain a non negative initial condition, $t_0$ has to be greater than $6\ln(\nicefrac{5}{2})$, what is satisfied if $t_0 = 5.5$. For a given $t\geq t_0$, $f_{\text{BKW}}$ has a density $\rho(t)=1$, a mean velocity $V(t) = 0$ and a temperature $T=1$. Thus, to have the stationary solution exactly in our trial space we use $V_{2,(0,0,0)^t, N}$.
Figure \ref{fig:errors_bkw} shows the error w.r.t. time for different numbers of integration nodes w.r.t. $\vbar$. In sub figures \subref{fig:errors_bkwa} and \subref{fig:errors_bkwb} we use $\nip = N$ integration points for each direction to integrate w.r.t. $\vbar$. In sub figures \subref{fig:errors_bkwa075} and \subref{fig:errors_bkwb075} we chose less integration points w.r.t. the mean velocity, $\nip = 0.75 N$. This gives a similar result as for the exact integration w.r.t. $\vbar$. However, sub figures \subref{fig:errors_bkwa05} and \subref{fig:errors_bkwb05} show that a too low order quadrature for $\vbar$ yields worse convergence ($\nip = 0.5N$). In figure \ref{fig:errors_bkwc} we show the maximum of the errors from figure \ref{fig:errors_bkw} over time. This numerically confirms exponential convergence w.r.t. $N$.
We like to point out that in case of radially symmetric solutions, the fast algorithm proposed in \cite{doi:10.1137/16M1096001} is able to  exploit this symmetry and consequently becomes very efficient in that case. The errors at the same number of unknowns per direction are comparable.


\subsection{Maxwell molecules - moments} 
\label{subsec:maxwell_moments}Consider again the constant collision kernel $B = \tfrac{1}{4\pi}$. The initial condition is a sum of 2 Maxwellians:
\begin{equation}
f_0(\vvec) = \frac{\rho_1}{(2\pi)^{\nicefrac{3}{2}}} e^{\bigl|\frac{\vvec - V_1}{\sqrt{2T_1}}\bigr|^2}+\frac{\rho_2}{(2\pi)^{\nicefrac{3}{2}}} e^{\bigl|\frac{\vvec - V_2}{\sqrt{2T_2}}\bigr|^2}\
\label{eq:initial_2peaks}
\end{equation}
where $\rho_1 = \rho_2 = \tfrac{1}{2}, T_1 = T_2 = 1$ and $V_1 = (2,2,0)^t$ and $V_2 = (-2,0,0)^t$. This is an initial condition with density $\rho = 1,V=(0,1,0)^t$ and $T = \tfrac{8}{3}$. Consequently we use $V_{\tfrac{16}{3},(0,1,0)^t, N}$ for approximation. There is no exact solution known, but formulas for the momentum flow $P$ and the energy flux $q$ exist.
\begin{equation*}
P_{ij} = \intrd \vvec_i\vvec_jf(\vvec)\,d\vvec, \quad \quad q_i=\intrd \vvec_j |\vvec|^2f(\vvec)\,d\vvec.
\end{equation*}
For the initial condition \refer{eq:initial_2peaks} the non zero entries of these are given by
\begin{align*}
P_{11} &= \frac{7}{3}e^{-\nicefrac{t}{2}}+\frac{8}{3} &P_{22} & = -\frac{2}{3}e^{-\nicefrac{t}{2}}+\frac{11}{3} &\\
P_{33} &= -\frac{5}{3}e^{-\nicefrac{t}{2}}+\frac{8}{3} &P_{12} & = -2e^{-\nicefrac{t}{2}} &\\
q_1 &= -2e^{-\nicefrac{t}{2}} &q_{2} & = -\frac{2}{3}e^{-\nicefrac{t}{2}}+\frac{43}{6}. &\\
\end{align*}

We present the results in figures \ref{fig:Linfty_twopeak} and \ref{fig:moments_twopeaks_o8_10}. In figure \ref{fig:Linfty_twopeak}\subref{fig:linfty_maxwell} we depict the maximum error in the moments over time from which we deduce exponential convergence for the moments. Figure \ref{fig:moments_twopeaks_o8_10} shows the error in second and third order moments over time. Note that already very low expansion order $N=8$ gives a reasonable accuracy of $1e-4$ for the moments. 


\subsection{hard sphere molecules - moments}
\label{subsec:hs_moments} We consider again the initial condition \refer{eq:initial_2peaks}, but now for a different collision kernel 
\begin{equation*}
B(\vvec,\wvec,\evec') = \frac{1}{4\pi}|v-w|.
\end{equation*}
The moments and their errors are shown in figures \ref{fig:Linfty_twopeak} and \ref{fig:moments_twopeaks_hardspheres_o8}. The errors were calculated using a reference solution of order $N=22$ with a time step $dt=0.001$. The maximal errors over time are shown in figure \ref{fig:Linfty_twopeak}\subref{fig:linfty_hardspheres}, from which we again deduce exponential convergence of the method. A comparison with the results presented in \cite{doi:10.1137/16M1096001} shows also a good agreement.

\subsection{angular dependent collision kernel - moments} Now the collision kernel is given by
\begin{equation*}
B(\vvec,\wvec,\evec') = \frac{1}{4\pi}|v-w|^{0.38}(1+\cos(\theta))^{0.4}, \quad \text{with } \cos(\theta) = \frac{(\vvec-\wvec)\cdot \evec'}{|\vvec-\wvec|}.
\end{equation*}
We consider once more the initial condition \refer{eq:initial_2peaks}. The time evolution of the moments is depicted in figure \ref{fig:moments_twopeaks_o16_argon}. They are in good agreement with those published in \cite{doi:10.1137/16M1096001}.
We note that in the case of anisotropic solutions, the method proposed in \cite{doi:10.1137/16M1096001} requires a higher number of unknons per direction (33), to reach a similar error in the approximation of the moments than we do with order 9 polynomials. Without taking the constants into account, we require $9^7$ operations compared to $74\cdot 33^4$ operations.
\section{Conclusion}
In the present paper we developed an efficient algorithm for the Boltzmann collision operator in the unbounded velocity space. This enables us to include the collision invariants in the test space. Consequently, the conservation properties of the collision operator are naturally carried forward to the discrete level. For the evaluation of the collision operator we proposed an algorithm requiring $\mathcal{O}(N^7)$ floating operations and a storage of $\mathcal{O}(N^4)$. Almost all numerical tasks we need to perform are matrix- matrix multiplications allowing the usage of highly optimized Lapack routines \cite{laug}. Additionally, parallelization w.r.t. the $\vbar$ integration nodes is straight forward. Although there are faster methods available, the approximation with weighted Hermite polynomials shows high accuracy when evaluating moments of higher order, see sections \ref{subsec:maxwell_moments}, \ref{subsec:hs_moments}. 
\section{Acknowledgments}
Gerhard Kitzler is funded by the Austrian Science Fund (FWF) project F 65.
\tikzset{every picture/.style={scale=0.9}}%

\newpage

\begin{figure}[H]
	\centering
	\resizebox{0.4\linewidth}{!}{
\begin{tikzpicture}

\definecolor{color2}{rgb}{0.172549019607843,0.627450980392157,0.172549019607843}
\definecolor{color1}{rgb}{1,0.498039215686275,0.0549019607843137}
\definecolor{color0}{rgb}{0.12156862745098,0.466666666666667,0.705882352941177}
\definecolor{color3}{rgb}{0.83921568627451,0.152941176470588,0.156862745098039}

\begin{axis}[
xmin=6.8, xmax=33.2,
ymin=7.98909956392505e-08, ymax=0.0091686102622806,
ymode=log,
tick align=inside,
tick pos=left,
x grid style={white!69.019607843137251!black},
y grid style={white!69.019607843137251!black},
xlabel = {$N$},
ylabel = {$\| \,.\, \|_{L_\infty(0,T)}$},
legend entries={{$e_{L_2}(t),\, n_{\vbar} = N$},{$e_{L_\infty}(t),\,n_{\vbar} = N$},{$e_{L_2}(t),\, n_{\vbar} = 0.5N$ },{$e_{L_\infty}(t),\,n_{\vbar} = 0.5N$}},
legend style={at ={(0.05,-0.05)},anchor=south west, draw=black},
legend cell align={left}
]
\addlegendimage{no markers, color0}
\addlegendimage{no markers, color1}
\addlegendimage{no markers, color2}
\addlegendimage{no markers, color3}
\addplot [semithick, color0, forget plot]
table {%
8 0.00329446508918284
12 0.000712284957641958
16 0.000137644746771085
24 4.50090300708767e-06
32 1.35671616881966e-07
};
\addplot [semithick, color1, forget plot]
table {%
8 0.00539898778621552
12 0.00104651479926545
16 0.000193098523467006
24 6.0582925939215e-06
32 1.79150073925135e-07
};
\addplot [semithick, color2, forget plot]
table {%
8 0.0032944650891828086
12 0.001299062236122403
16 0.0006363929261167939
24 0.00011442378009659387
32 1.628349963795489e-05
};
\addplot [semithick, color3, forget plot]
table {%
8 0.00539898778621551
12 0.003070919044811642
16 0.0014191555838631283
24 0.00021852137924129614
32 2.748974343313502e-05
};

\end{axis}

\end{tikzpicture}}
	\caption{BKW solution. $\|e_{L_2}(t)\|_{L_\infty(5.5,8.5)}$, $\|e_{L_\infty}(t)\|_{L_\infty(5.5,8.5)}$ for different numbers $\nip$ of integration points for $\vbar$. Time stepping was done with Runge Kutta 4, $dt = 0.1$.}
	\label{fig:errors_bkwc}
\end{figure}
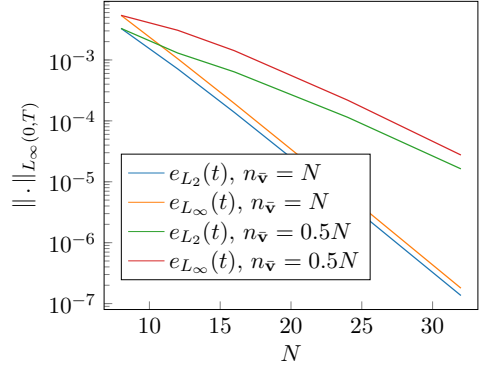

\begin{figure}[H]
	\centering
	\begin{subfigure}[b]{0.49\textwidth}
		\resizebox{0.86\linewidth}{!}{
\begin{tikzpicture}

\definecolor{color2}{rgb}{0.172549019607843,0.627450980392157,0.172549019607843}
\definecolor{color1}{rgb}{1,0.498039215686275,0.0549019607843137}
\definecolor{color4}{rgb}{0.580392156862745,0.403921568627451,0.741176470588235}
\definecolor{color5}{rgb}{0.549019607843137,0.337254901960784,0.294117647058824}
\definecolor{color0}{rgb}{0.12156862745098,0.466666666666667,0.705882352941177}
\definecolor{color3}{rgb}{0.83921568627451,0.152941176470588,0.156862745098039}

\begin{axis}[
xlabel={N},
ylabel={$\|\,.\,\|_{L_\infty(0,6)}$},
xmin=8, xmax=24,
ymin=1e-14, ymax=0.0005,
xmode=log,
ymode=log,
tick align=inside,
tick pos=left,
x grid style={white!69.019607843137251!black},
y grid style={white!69.019607843137251!black},
xtick = {8,12,16,24},
xticklabels = {8,12,16,24},
legend entries = {{$P_{11}$},{$P_{22}$},{$P_{33}$},{$P_{12}$},{$q_{1}$},{$q_{2}$}},
legend style={at={(1,0.9)},anchor=north west,nodes=right,draw = black}
]

\addlegendimage{no markers, color0}
\addlegendimage{no markers, color1}
\addlegendimage{no markers, color2}
\addlegendimage{no markers, color3}
\addlegendimage{no markers, color4}
\addlegendimage{no markers, color5}

\addplot [semithick, color0, forget plot]
table {%
8 9.72031810038132e-05
12 2.25868948433572e-07
16 2.91635338101059e-09
24 4.5772274859246e-12
};
\addplot [semithick, color1, forget plot]
table {%
8 1.76972796102071e-05
12 2.3155115602691e-07
16 6.71748878744438e-10
24 1.31494815036604e-12
};
\addplot [semithick, color2, forget plot]
table {%
8 6.22239432982452e-05
12 3.48243861081343e-07
16 2.56109489171763e-09
24 3.25517390820096e-12
};
\addplot [semithick, color3, forget plot]
table {%
8 4.29643366905985e-05
12 7.45708246263632e-08
16 7.08200165178141e-10
24 3.89699383873676e-12
};
\addplot [semithick, color4, forget plot]
table {%
8 4.29643366826049e-05
12 7.4570825292497e-08
16 7.08196168375252e-10
24 3.90165677544019e-12
};
\addplot [semithick, color5, forget plot]
table {%
8 1.28046594616293e-05
12 5.20534577219678e-07
16 8.29902369048341e-10
24 1.31628041799559e-12
};
\end{axis}

\end{tikzpicture}}
		\subcaption{}
		\label{fig:linfty_maxwell}
	\end{subfigure}
	\hfill
	\begin{subfigure}[b]{0.49\textwidth}
		\resizebox{0.86\linewidth}{!}{
\begin{tikzpicture}

\definecolor{color2}{rgb}{0.172549019607843,0.627450980392157,0.172549019607843}
\definecolor{color1}{rgb}{1,0.498039215686275,0.0549019607843137}
\definecolor{color4}{rgb}{0.580392156862745,0.403921568627451,0.741176470588235}
\definecolor{color5}{rgb}{0.549019607843137,0.337254901960784,0.294117647058824}
\definecolor{color0}{rgb}{0.12156862745098,0.466666666666667,0.705882352941177}
\definecolor{color3}{rgb}{0.83921568627451,0.152941176470588,0.156862745098039}

\begin{axis}[
xlabel={N},
ylabel={$\|\,.\,\|_{L_\infty(0,6)}$},
xmin=8, xmax=24,
ymin=1e-08, ymax=0.0005,
xmode=log,
ymode=log,
tick align=inside,
tick pos=left,
x grid style={white!69.019607843137251!black},
y grid style={white!69.019607843137251!black},
xtick = {8,12,16,24},
xticklabels = {8,12,16,24},
legend entries = {{$P_{11}$},{$P_{22}$},{$P_{33}$},{$P_{12}$},{$q_{1}$},{$q_{2}$}},
legend style={at={(1,0.9)},anchor=north west,nodes=right,draw = black}
]

\addlegendimage{no markers, color0}
\addlegendimage{no markers, color1}
\addlegendimage{no markers, color2}
\addlegendimage{no markers, color3}
\addlegendimage{no markers, color4}
\addlegendimage{no markers, color5}

\addplot [semithick, color0, forget plot]
table {%
8 0.000140394654084108
12 1.25808430420093e-05
16 1.4610405236759e-06
20 1.2525400716612e-07
};
\addplot [semithick, color1, forget plot]
table {%
8 0.000289143856125218
12 2.44753798335218e-05
16 2.75217326572985e-06
20 2.3020777151217e-07
};
\addplot [semithick, color2, forget plot]
table {%
8 0.000167313537003722
12 1.87069830519526e-05
16 1.92820766020674e-06
20 1.67477695711682e-07
};
\addplot [semithick, color3, forget plot]
table {%
8 4.29643366923749e-05
12 2.13359549894321e-06
16 1.86371160296184e-07
20 2.9893331476849e-08
};
\addplot [semithick, color4, forget plot]
table {%
8 4.29643366834931e-05
12 2.13359549405823e-06
16 1.86371162336219e-07
20 2.98933408027224e-08
};
\addplot [semithick, color5, forget plot]
table {%
8 0.000284251235956212
12 2.47643632302896e-05
16 2.75233139390707e-06
20 2.30207477969202e-07
};
\end{axis}

\end{tikzpicture}}
		\subcaption{}
		\label{fig:linfty_hardspheres}
	\end{subfigure}
\caption{Maxwellian sum. The maximum of the error in the moments over time for Maxwell molecules \subref{fig:linfty_maxwell}, and hard spheres \subref{fig:linfty_hardspheres}.}
\label{fig:Linfty_twopeak}
\end{figure}
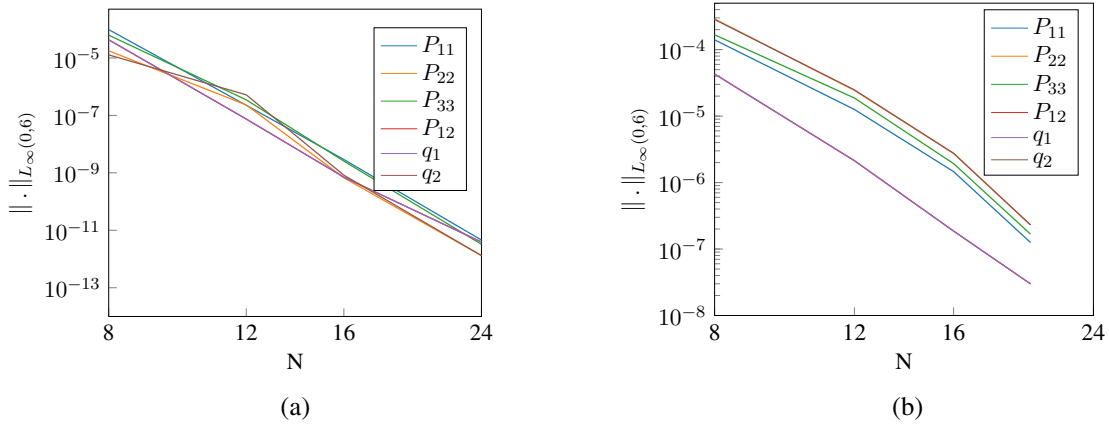

\begin{table}
\centering
  \begin{tabular}{cccc}
   $N$ & 1 thread & 8 threads & 16 threads \\ \hline
   4 &   0.0013 &  0.0009 (1.44) & 0.0008 (1.63) \\
   8 &   0.1129 &  0.0189 (5.97) & 0.0092 (12.27)\\
  16 &  10.3471 &  1.3001 (7.96) & 0.7255 (14.26)\\
  24 & 153.2798 & 19.2243 (7.97) &10.3800 (14.77)\\
  32 &1196.0663 &158.7430 (7.53) &99.0189 (12.10)\\
  \end{tabular}
\caption{Computation times [s] for a single application of the collision operator. The values in brackets show the speedup w.r.t. 1 thread.}
\label{tab:timings}
\end{table}

\begin{figure}[H]
    \centering
    \begin{subfigure}[b]{0.49\textwidth}
		\resizebox{0.925\linewidth}{!}{
\begin{tikzpicture}

\definecolor{color2}{rgb}{0.172549019607843,0.627450980392157,0.172549019607843}
\definecolor{color1}{rgb}{1,0.498039215686275,0.0549019607843137}
\definecolor{color4}{rgb}{0.580392156862745,0.403921568627451,0.741176470588235}
\definecolor{color0}{rgb}{0.12156862745098,0.466666666666667,0.705882352941177}
\definecolor{color3}{rgb}{0.83921568627451,0.152941176470588,0.156862745098039}

\begin{axis}[
xmin=5.355, xmax=8.54499999999999,
ymin=1e-9, ymax=0.01,
ymode=log,
tick align=inside,
tick pos=left,
x grid style={white!69.019607843137251!black},
y grid style={white!69.019607843137251!black},
ylabel = {$e_{L_\infty}(t)$},
xlabel = {time},
]

\addplot [semithick, color0, forget plot]
table {%
5.5 0.00329446508918284
5.6 0.00296927807129354
5.7 0.00267830925051757
5.8 0.00241772793568754
5.9 0.00218415802013787
6 0.00197462098949998
6.1 0.0017864864612648
6.2 0.00161742920820884
6.3 0.00146539177116272
6.4 0.00132855189638478
6.5 0.0012052941427964
6.6 0.00109418509770801
6.7 0.000993951719053615
6.8 0.000903462389750607
6.9 0.000821710327452749
6.99999999999999 0.000747799042197461
7.09999999999999 0.000680929576558451
7.19999999999999 0.000620389298975829
7.29999999999999 0.000565542051864251
7.39999999999999 0.000515819482656714
7.49999999999999 0.000470713408775962
7.59999999999999 0.000429769087183909
7.69999999999999 0.000392579276102679
7.79999999999999 0.000358778991123826
7.89999999999999 0.000328040870553235
7.99999999999999 0.000300071075767364
8.09999999999999 0.000274605661815544
8.19999999999999 0.000251407361705307
8.29999999999999 0.00023026273492293
8.39999999999999 0.000210979636922272
};
\addplot [semithick, color1, forget plot]
table {%
5.5 0.000712284957641958
5.6 0.000619769597909397
5.7 0.000539736910481684
5.8 0.000470439514137413
5.9 0.000410383694193672
6 0.000358291203768648
6.1 0.000313067036029031
6.2 0.000273772197747837
6.3 0.00023960067823291
6.4 0.000209859942658156
6.5 0.000183954390293312
6.6 0.000161371310345264
6.7 0.000141668944550345
6.8 0.000124466329103082
6.9 0.000109434641262812
6.99999999999999 9.62898199157874e-05
7.09999999999999 8.47862660197651e-05
7.19999999999999 7.47114594695092e-05
7.29999999999999 6.58813545332607e-05
7.39999999999999 5.81364374637044e-05
7.49999999999999 5.13383478841084e-05
7.59999999999999 4.53669806673718e-05
7.69999999999999 4.01179977412638e-05
7.79999999999999 3.55006899594029e-05
7.89999999999999 3.14361382056094e-05
7.99999999999999 2.7855630520518e-05
8.09999999999999 2.46992984792582e-05
8.19999999999999 2.19149414987449e-05
8.29999999999999 1.94570123683568e-05
8.39999999999999 1.7285741211695e-05
};
\addplot [semithick, color2, forget plot]
table {%
5.5 0.000137644746771085
5.6 0.000115863755218862
5.7 9.76198340431889e-05
5.8 8.23244586550566e-05
5.9 6.94894346631135e-05
6 5.87093582427971e-05
6.1 4.96472248805516e-05
6.2 4.20226054162719e-05
6.3 3.56019187216713e-05
6.4 3.01904191450583e-05
6.5 2.56255883856506e-05
6.6 2.17716791979586e-05
6.7 1.85152050068763e-05
6.8 1.57612073211633e-05
6.9 1.34301635026923e-05
6.99999999999999 1.14554223665728e-05
7.09999999999999 9.78107536739325e-06
7.19999999999999 8.36018765038721e-06
7.29999999999999 7.15332673467683e-06
7.39999999999999 6.12733761826828e-06
7.49999999999999 5.2543221150253e-06
7.59999999999999 4.51078762570001e-06
7.69999999999999 3.87693661031696e-06
7.79999999999999 3.33607301167965e-06
7.89999999999999 2.87410597757387e-06
7.99999999999999 2.47913460328653e-06
8.09999999999999 2.14110019857577e-06
8.19999999999999 1.85149487814951e-06
8.29999999999999 1.60311717131018e-06
8.39999999999999 1.38986691538417e-06
};
\addplot [semithick, color3, forget plot]
table {%
5.5 4.50090300708767e-06
5.6 3.54820485692041e-06
5.7 2.80006925561585e-06
5.8 2.21202927482532e-06
5.9 1.74939952510972e-06
6 1.3850978392918e-06
6.1 1.09795847650848e-06
6.2 8.71424356330835e-07
6.3 6.92531972559807e-07
6.4 5.51122604355164e-07
6.5 4.39228716345857e-07
6.6 3.5059614491087e-07
6.7 2.80311651688903e-07
6.8 2.24512329977111e-07
6.9 1.80158665680219e-07
6.99999999999999 1.44857150323474e-07
7.09999999999999 1.167215073275e-07
7.19999999999999 9.42640348070012e-08
7.29999999999999 7.63104605200349e-08
7.39999999999999 6.1933168434003e-08
7.49999999999999 5.0398793104145e-08
7.59999999999999 4.11270591957336e-08
7.69999999999999 3.36584300431042e-08
7.79999999999999 2.76286615877308e-08
7.89999999999999 2.27487739409837e-08
7.99999999999999 1.87892761627229e-08
8.09999999999999 1.556773246425e-08
8.19999999999999 1.29389558720105e-08
8.29999999999999 1.07872685922716e-08
8.39999999999999 9.02038935907123e-09
};
\addplot [semithick, color4, forget plot]
table {%
5.5 1.35671616881966e-07
5.6 9.93321501857602e-08
5.7 7.25447627959036e-08
5.8 5.27806203153014e-08
5.9 3.81864701387569e-08
6 2.74026516443315e-08
6.1 1.99778066936607e-08
6.2 1.50689049155384e-08
6.3 1.14356854680198e-08
6.4 8.74341037071513e-09
6.5 6.74551790813549e-09
6.6 5.26030392161703e-09
6.7 4.15379761142809e-09
6.8 3.32717833839702e-09
6.9 2.70752307379718e-09
6.99999999999999 2.24099917095488e-09
7.09999999999999 1.88785281851622e-09
7.19999999999999 2.1054363322226e-09
7.29999999999999 2.25434603623453e-09
7.39999999999999 2.34294046663752e-09
7.49999999999999 2.38652560213826e-09
7.59999999999999 2.39661141626657e-09
7.69999999999999 2.38187333051476e-09
7.79999999999999 2.34886545630308e-09
7.89999999999999 2.30255038646821e-09
7.99999999999999 2.24669430975988e-09
8.09999999999999 2.18416087299778e-09
8.19999999999999 2.11713145625847e-09
8.29999999999999 2.04726959690538e-09
8.39999999999999 1.97584459904476e-09
};
\end{axis}

\end{tikzpicture}}
		\subcaption{}
		\label{fig:errors_bkwa}
    \end{subfigure}
    \hfill
    \begin{subfigure}[b]{0.49\textwidth}
		\resizebox{0.925\linewidth}{!}{
\begin{tikzpicture}

\definecolor{color2}{rgb}{0.172549019607843,0.627450980392157,0.172549019607843}
\definecolor{color1}{rgb}{1,0.498039215686275,0.0549019607843137}
\definecolor{color4}{rgb}{0.580392156862745,0.403921568627451,0.741176470588235}
\definecolor{color0}{rgb}{0.12156862745098,0.466666666666667,0.705882352941177}
\definecolor{color3}{rgb}{0.83921568627451,0.152941176470588,0.156862745098039}

\begin{axis}[
xmin=5.355, xmax=8.54499999999999,
ymin=1e-9, ymax=0.01,
ymode=log,
tick align=inside,
tick pos=left,
x grid style={white!69.019607843137251!black},
y grid style={white!69.019607843137251!black},
ylabel = {$e_{L_2}(t)$},
xlabel = {time},
]

\addplot [semithick, color0, forget plot]
table {%
5.5 0.00539898778621552
5.6 0.00488192453777078
5.7 0.00441744386816675
5.8 0.00399984162104235
5.9 0.00362407553609227
6 0.00328568308260357
6.1 0.00298071017920136
6.2 0.00270564926640188
6.3 0.00245738542726599
6.4 0.00223314944415322
6.5 0.00203047684225002
6.6 0.00184717210811729
6.7 0.00168127738804175
6.8 0.00153104506986774
6.9 0.00139491373604197
6.99999999999999 0.00127148704716184
7.09999999999999 0.00115951517633439
7.19999999999999 0.00105787846675864
7.29999999999999 0.000965573029507998
7.39999999999999 0.000881698036658441
7.49999999999999 0.000805444497646694
7.59999999999999 0.000736085334862783
7.69999999999999 0.000672966598668494
7.79999999999999 0.000615499682864243
7.89999999999999 0.000563154419591841
7.99999999999999 0.000515452948175248
8.09999999999999 0.000471964265813684
8.19999999999999 0.000432299379655828
8.29999999999999 0.000396106989848364
8.39999999999999 0.00036306964188868
};
\addplot [semithick, color1, forget plot]
table {%
5.5 0.00104651479926545
5.6 0.000916162871070042
5.7 0.000802597208116837
5.8 0.000703579386036592
5.9 0.000617180921049217
6 0.000541738571136129
6.1 0.000475816370427914
6.2 0.000418173335091584
6.3 0.000367735954481445
6.4 0.000323574726385248
6.5 0.000284884115376938
6.6 0.000250965413052344
6.7 0.000221212061907298
6.8 0.000195097073774198
6.9 0.000172162231474614
6.99999999999999 0.000152008810642079
7.09999999999999 0.000134289599138894
7.19999999999999 0.00011870202545601
7.29999999999999 0.000104982236042022
7.39999999999999 9.28999855526942e-05
7.49999999999999 8.22542242912617e-05
7.59999999999999 7.28692842384954e-05
7.69999999999999 6.45915795591446e-05
7.79999999999999 5.72867497442164e-05
7.89999999999999 5.08371839570894e-05
7.99999999999999 4.5139873993794e-05
8.09999999999999 4.01045507862758e-05
8.19999999999999 3.5652065780203e-05
8.29999999999999 3.1712983977381e-05
8.39999999999999 2.82263600925725e-05
};
\addplot [semithick, color2, forget plot]
table {%
5.5 0.000193098523467006
5.6 0.00016360965425303
5.7 0.0001387209821771
5.8 0.000117699853107343
5.9 9.99328773305469e-05
6 8.49060476121929e-05
6.1 7.21882597306201e-05
6.2 6.14176361400562e-05
6.3 5.22901623772667e-05
6.4 4.45502342930478e-05
6.5 3.79827861571406e-05
6.6 3.24067283421878e-05
6.7 2.76694711873081e-05
6.8 2.36423508158366e-05
6.9 2.0216804778359e-05
6.99999999999999 1.73011717293627e-05
7.09999999999999 1.48180109926016e-05
7.19999999999999 1.27018556862866e-05
7.29999999999999 1.0897327763924e-05
7.39999999999999 9.3575554470501e-06
7.49999999999999 8.04284354217242e-06
7.59999999999999 6.91955542203194e-06
7.69999999999999 5.95917231831772e-06
7.79999999999999 5.13750126691646e-06
7.89999999999999 4.43400777631303e-06
7.99999999999999 3.83125321970555e-06
8.09999999999999 3.31442022218322e-06
8.19999999999999 2.87091203830434e-06
8.29999999999999 2.49001418608915e-06
8.39999999999999 2.16260849893027e-06
};
\addplot [semithick, color3, forget plot]
table {%
5.5 6.0582925939215e-06
5.6 4.80358972375019e-06
5.7 3.81162667898571e-06
5.8 3.02686630027076e-06
5.9 2.40562026039914e-06
6 1.91349621462482e-06
6.1 1.52340086905755e-06
6.2 1.21397647391173e-06
6.3 9.68375593032112e-07
6.4 7.73300152979455e-07
6.5 6.18247159257492e-07
6.6 4.94916173808351e-07
6.7 3.96743516945138e-07
6.8 3.18535828394397e-07
6.9 2.56181594698465e-07
6.99999999999999 2.0642390352095e-07
7.09999999999999 1.66681315106372e-07
7.19999999999999 1.34906575780476e-07
7.29999999999999 1.09475113170798e-07
7.39999999999999 8.90969858550227e-08
7.49999999999999 7.27473169213466e-08
7.59999999999999 5.96113037412799e-08
7.69999999999999 4.90407301317748e-08
7.79999999999999 4.05195607632118e-08
7.89999999999999 3.36367110694254e-08
7.99999999999999 2.80644888920739e-08
8.09999999999999 2.35415202460557e-08
8.19999999999999 1.9859220577961e-08
8.29999999999999 1.68510681864493e-08
8.39999999999999 1.43840912778483e-08
};
\addplot [semithick, color4, forget plot]
table {%
5.5 1.79150073925135e-07
5.6 1.32841632898751e-07
5.7 9.8577017539245e-08
5.8 7.32110853523535e-08
5.9 5.44252317084227e-08
6 4.05093784914784e-08
6.1 3.02018366111569e-08
6.2 2.25715787031494e-08
6.3 1.69315894882222e-08
6.4 1.27749518759396e-08
6.5 9.72747888544614e-09
6.6 7.51225602587374e-09
6.7 5.92267423192289e-09
6.8 4.80173534392327e-09
6.9 4.02679714385125e-09
6.99999999999999 3.49994104689858e-09
7.09999999999999 3.14355120135503e-09
7.19999999999999 2.89875646015412e-09
7.29999999999999 2.72376114826021e-09
7.39999999999999 2.59083365898292e-09
7.49999999999999 2.48259265973038e-09
7.59999999999999 2.38861348167735e-09
7.69999999999999 2.30285789953726e-09
7.79999999999999 2.22193823199718e-09
7.89999999999999 2.14402288881896e-09
7.99999999999999 2.06817386309498e-09
8.09999999999999 1.9939552462098e-09
8.19999999999999 1.92120564186376e-09
8.29999999999999 1.84990712235137e-09
8.39999999999999 1.78011099556506e-09
};
\end{axis}

\end{tikzpicture}}
		\subcaption{}
		\label{fig:errors_bkwb}		
    \end{subfigure}

    \begin{subfigure}[b]{0.49\textwidth}
		\resizebox{0.925\linewidth}{!}{
\begin{tikzpicture}

\definecolor{color2}{rgb}{0.172549019607843,0.627450980392157,0.172549019607843}
\definecolor{color1}{rgb}{1,0.498039215686275,0.0549019607843137}
\definecolor{color4}{rgb}{0.580392156862745,0.403921568627451,0.741176470588235}
\definecolor{color0}{rgb}{0.12156862745098,0.466666666666667,0.705882352941177}
\definecolor{color3}{rgb}{0.83921568627451,0.152941176470588,0.156862745098039}

\begin{axis}[
xmin=5.355, xmax=8.54499999999999,
ymin=1e-9, ymax=0.01,
ymode=log,
tick align=inside,
tick pos=left,
x grid style={white!69.019607843137251!black},
y grid style={white!69.019607843137251!black},
ylabel = {$e_{L_\infty}(t)$},
xlabel = {time},
]

\addplot [semithick, color0, forget plot]
table {%
5.5 0.00329447
5.6 0.00295272
5.7 0.00264719
5.8 0.00237374
5.9 0.00212877
6 0.00190907
6.1 0.00171187
6.2 0.00153471
6.3 0.00137541
6.4 0.00123206
6.5 0.00110299
6.6 0.000986684
6.7 0.000881828
6.8 0.000787244
6.9 0.000701889
7 0.000624833
7.1 0.000555249
7.2 0.000492397
7.3 0.000435619
7.4 0.000384325
7.5 0.000337985
7.6 0.000296125
7.7 0.000258321
7.8 0.000224189
7.9 0.000193385
8 0.000165598
8.1 0.000140549
8.2 0.000117985
8.3 9.76763e-05
8.4 7.9417e-05
};
\addplot [semithick, color1, forget plot]
table {%
5.5 0.000712285
5.6 0.000637623
5.7 0.0005722
5.8 0.000514748
5.9 0.000464183
6 0.000419578
6.1 0.00038014
6.2 0.000345188
6.3 0.000314138
6.4 0.000286489
6.5 0.000261809
6.6 0.000239727
6.7 0.000219921
6.8 0.000202116
6.9 0.000186071
7 0.000171578
7.1 0.000158457
7.2 0.000146553
7.3 0.000135729
7.4 0.000125865
7.5 0.000116859
7.6 0.000108619
7.7 0.000101066
7.8 9.41303e-05
7.9 8.77501e-05
8 8.18714e-05
8.1 7.64462e-05
8.2 7.14322e-05
8.3 6.67915e-05
8.4 6.24909e-05
};
\addplot [semithick, color2, forget plot]
table {%
5.5 0.000137645
5.6 0.000109121
5.7 8.56638e-05
5.8 6.64031e-05
5.9 5.06176e-05
  6 3.77108e-05
6.1 2.71883e-05
6.2 1.86402e-05
6.3 1.19138e-05
6.4 8.87589e-06
6.5 8.05957e-06
6.6 8.27033e-06
6.7 8.3513e-06
6.8 8.32793e-06
6.9 8.31079e-06
  7 9.50756e-06
7.1 1.03599e-05
7.2 1.0932e-05
7.3 1.12773e-05
7.4 1.14398e-05
7.5 1.14563e-05
7.6 1.13572e-05
7.7 1.11675e-05
7.8 1.09079e-05
7.9 1.05956e-05
  8 1.02444e-05
8.1 9.86586e-06
8.2 9.46928e-06
8.3 9.06223e-06
8.4 8.6508e-06
};
\addplot [semithick, color3, forget plot]
table {%
5.5 4.5009e-06
5.6 3.23498e-06
5.7 2.26131e-06
5.8 1.51549e-06
5.9 9.47221e-07
  6 5.94802e-07
6.1 3.36291e-07
6.2 2.69133e-07
6.3 3.07121e-07
6.4 3.42437e-07
6.5 4.28013e-07
6.6 4.84043e-07
6.7 5.17558e-07
6.8 5.34051e-07
6.9 5.37803e-07
  7 5.32148e-07
7.1 5.19676e-07
7.2 5.02389e-07
7.3 4.81832e-07
7.4 4.5919e-07
7.5 4.35366e-07
7.6 4.11041e-07
7.7 3.86725e-07
7.8 3.62793e-07
7.9 3.39513e-07
  8 3.17076e-07
8.1 2.95606e-07
8.2 2.75183e-07
8.3 2.55847e-07
8.4 2.37612e-07

};
\addplot [semithick, color4, forget plot]
table {%
5.5 1.35672e-07
5.6 8.9502e-08
5.7 5.60799e-08
5.8 3.37624e-08
5.9 1.97993e-08
  6 9.86299e-09
6.1 7.36812e-09
6.2 1.14274e-08
6.3 1.51768e-08
6.4 1.74954e-08
6.5 1.87824e-08
6.6 1.93321e-08
6.7 1.93614e-08
6.8 1.90295e-08
6.9 1.84534e-08
  7 1.77183e-08
7.1 1.68866e-08
7.2 1.60028e-08
7.3 1.50992e-08
7.4 1.41985e-08
7.5 1.33161e-08
7.6 1.24629e-08
7.7 1.16454e-08
7.8 1.0868e-08
7.9 1.01326e-08
  8 9.43995e-09
8.1 8.78978e-09
8.2 8.1811e-09
8.3 7.61243e-09
8.4 7.08203e-09
};
\end{axis}

\end{tikzpicture}}
		\subcaption{}
		\label{fig:errors_bkwa075}
    \end{subfigure}
    \hfill
    \begin{subfigure}[b]{0.49\textwidth}
		\resizebox{0.925\linewidth}{!}{
\begin{tikzpicture}

\definecolor{color2}{rgb}{0.172549019607843,0.627450980392157,0.172549019607843}
\definecolor{color1}{rgb}{1,0.498039215686275,0.0549019607843137}
\definecolor{color4}{rgb}{0.580392156862745,0.403921568627451,0.741176470588235}
\definecolor{color0}{rgb}{0.12156862745098,0.466666666666667,0.705882352941177}
\definecolor{color3}{rgb}{0.83921568627451,0.152941176470588,0.156862745098039}

\begin{axis}[
xmin=5.355, xmax=8.54499999999999,
ymin=1e-9, ymax=0.01,
ymode=log,
tick align=inside,
tick pos=left,
x grid style={white!69.019607843137251!black},
y grid style={white!69.019607843137251!black},
ylabel = {$e_{L_2}(t)$},
xlabel = {time},
]

\addplot [semithick, color0, forget plot]
table {%
5.5 0.00539899
5.6 0.00482908
5.7 0.00431993
5.8 0.00386504
5.9 0.00345864
  6 0.0030956
6.1 0.00277135
6.2 0.00248184
6.3 0.00222343
6.4 0.00199291
6.5 0.00178739
6.6 0.00160428
6.7 0.0014413
6.8 0.00129636
6.9 0.00116762
  7 0.00105341
7.1 0.000952245
7.2 0.000862761
7.3 0.00078374
7.4 0.000714075
7.5 0.000652756
7.6 0.000598866
7.7 0.000551563
7.8 0.000510079
7.9 0.000473712
  8 0.000441821
8.1 0.000413824
8.2 0.000389194
8.3 0.000367459
8.4 0.000348199

};
\addplot [semithick, color1, forget plot]
table {%
5.5 0.00104651
5.6 0.000944528
5.7 0.000855184
5.8 0.000776635
5.9 0.000707319
  6 0.000645914
6.1 0.000591306
6.2 0.000542551
6.3 0.000498855
6.4 0.000459545
6.5 0.000424054
6.6 0.000391901
6.7 0.000362678
6.8 0.00033604
6.9 0.00031169
  7 0.000289377
7.1 0.000268883
7.2 0.000250022
7.3 0.000232632
7.4 0.000216573
7.5 0.00020172
7.6 0.000187966
7.7 0.000175215
7.8 0.000163383
7.9 0.000152394
  8 0.00014218
8.1 0.00013268
8.2 0.00012384
8.3 0.00011561
8.4 0.000107944

};
\addplot [semithick, color2, forget plot]
table {%
5.5 0.000193099
5.6 0.000154853
5.7 0.000123417
5.8 9.77494e-05
5.9 7.70016e-05
  6 6.04949e-05
6.1 4.76984e-05
6.2 3.81969e-05
6.3 3.16246e-05
6.4 2.75572e-05
6.5 2.54277e-05
6.6 2.45789e-05
6.7 2.44214e-05
6.8 2.45375e-05
6.9 2.46809e-05
  7 2.47265e-05
7.1 2.46229e-05
7.2 2.43592e-05
7.3 2.39449e-05
7.4 2.3399e-05
7.5 2.27441e-05
7.6 2.20027e-05
7.7 2.11958e-05
7.8 2.03423e-05
7.9 1.94586e-05
  8 1.85587e-05
8.1 1.76542e-05
8.2 1.67547e-05
8.3 1.5868e-05
8.4 1.50002e-05

};
\addplot [semithick, color3, forget plot]
table {%
5.5 6.05829e-06
5.6 4.43214e-06
5.7 3.1858e-06
5.8 2.24587e-06
5.9 1.56002e-06
  6 1.09621e-06
6.1 8.36535e-07
6.2 7.48149e-07
6.3 7.60657e-07
6.4 8.06021e-07
6.5 8.49062e-07
6.6 8.77752e-07
6.7 8.90113e-07
6.8 8.87788e-07
6.9 8.73462e-07
  7 8.49878e-07
7.1 8.19488e-07
7.2 7.84353e-07
7.3 7.46149e-07
7.4 7.06206e-07
7.5 6.65562e-07
7.6 6.25009e-07
7.7 5.85144e-07
7.8 5.46402e-07
7.9 5.09093e-07
  8 4.73425e-07
8.1 4.3953e-07
8.2 4.07478e-07
8.3 3.77294e-07
8.4 3.48968e-07
};
\addplot [semithick, color4, forget plot]
table {%
5.5 1.7915e-07
5.6 1.21421e-07
5.7 7.98794e-08
5.8 5.06761e-08
5.9 3.13867e-08
  6 2.10058e-08
6.1 1.84798e-08
6.2 2.0015e-08
6.3 2.21525e-08
6.4 2.36996e-08
6.5 2.44845e-08
6.6 2.4612e-08
6.7 2.42373e-08
6.8 2.35032e-08
6.9 2.2526e-08
  7 2.13961e-08
7.1 2.01809e-08
7.2 1.89303e-08
7.3 1.768e-08
7.4 1.64549e-08
7.5 1.52722e-08
7.6 1.41428e-08
7.7 1.30736e-08
7.8 1.20678e-08
7.9 1.11266e-08
  8 1.02496e-08
8.1 9.435e-09
8.2 8.68045e-09
8.3 7.98305e-09
8.4 7.33962e-09
};
\end{axis}

\end{tikzpicture}}
		\subcaption{}
		\label{fig:errors_bkwb075}		
    \end{subfigure}
    
    \begin{subfigure}[b]{0.49\textwidth}
		\resizebox{0.925\linewidth}{!}{
\begin{tikzpicture}

\definecolor{color2}{rgb}{0.172549019607843,0.627450980392157,0.172549019607843}
\definecolor{color1}{rgb}{1,0.498039215686275,0.0549019607843137}
\definecolor{color4}{rgb}{0.580392156862745,0.403921568627451,0.741176470588235}
\definecolor{color0}{rgb}{0.12156862745098,0.466666666666667,0.705882352941177}
\definecolor{color3}{rgb}{0.83921568627451,0.152941176470588,0.156862745098039}

\begin{axis}[
xmin=5.355, xmax=8.54499999999999,
ymin=1e-9, ymax=0.01,
ymode=log,
tick align=inside,
tick pos=left,
x grid style={white!69.019607843137251!black},
y grid style={white!69.019607843137251!black},
ylabel = {$e_{L_\infty}(t)$},
xlabel = {time},
]

\addplot [semithick, color0, forget plot]
table {%
5.5 0.00329447
5.6 0.0031895
5.7 0.00306258
5.8 0.00291664
5.9 0.00275452
  6 0.00257899
6.1 0.00239262
6.2 0.00219784
6.3 0.00199685
6.4 0.00179167
6.5 0.0015841
6.6 0.00137575
6.7 0.00116802
6.8 0.000964013
6.9 0.00097358
  7 0.000976882
7.1 0.000994632
7.2 0.00101791
7.3 0.00105201
7.4 0.00109484
7.5 0.00113253
7.6 0.00116552
7.7 0.00119421
7.8 0.00121896
7.9 0.00124011
  8 0.00125797
8.1 0.00127281
8.2 0.00135426
8.3 0.00146902
8.4 0.00157703

};
\addplot [semithick, color1, forget plot]
table {%
5.5 0.000712285
5.6 0.000586579
5.7 0.000467012
5.8 0.000352686
5.9 0.000367202
  6 0.000443389
6.1 0.000509635
6.2 0.000566945
6.3 0.000616227
6.4 0.000658307
6.5 0.000693929
6.6 0.000723767
6.7 0.000748431
6.8 0.00076847
6.9 0.000784382
  7 0.000796615
7.1 0.000808881
7.2 0.000838594
7.3 0.000895834
7.4 0.000949404
7.5 0.000999355
7.6 0.00104575
7.7 0.00108865
7.8 0.00112815
7.9 0.00116433
  8 0.0011973
8.1 0.00122714
8.2 0.00125397
8.3 0.00127791
8.4 0.00129906

};
\addplot [semithick, color2, forget plot]
table {%
5.5 0.000137645
5.6 6.26773e-05
5.7 0.000127053
5.8 0.000182695
5.9 0.000230551
  6 0.000272032
6.1 0.000308916
6.2 0.000340198
6.3 0.000378375
6.4 0.000413847
6.5 0.000445389
6.6 0.000473434
6.7 0.000498349
6.8 0.000520445
6.9 0.000539988
  7 0.000557205
7.1 0.000572294
7.2 0.000585429
7.3 0.000596761
7.4 0.000606427
7.5 0.000614548
7.6 0.000621234
7.7 0.000626586
7.8 0.000630695
7.9 0.000633647
  8 0.000635522
8.1 0.000636393
8.2 0.000636329
8.3 0.000635397
8.4 0.000633658
};
\addplot [semithick, color3, forget plot]
table {%
5.5 4.5009e-06
5.6 2.15471e-05
5.7 4.26232e-05
5.8 5.96044e-05
5.9 7.32118e-05
  6 8.40382e-05
6.1 9.25717e-05
6.2 9.92143e-05
6.3 0.000104297
6.4 0.000108093
6.5 0.000110828
6.6 0.000112688
6.7 0.000113826
6.8 0.00011437
6.9 0.000114424
  7 0.000114075
7.1 0.000113395
7.2 0.000112444
7.3 0.000111271
7.4 0.000109917
7.5 0.000108417
7.6 0.000106799
7.7 0.000105086
7.8 0.000103298
7.9 0.000101451
  8 9.95607e-05
8.1 9.76372e-05
8.2 9.56905e-05
8.3 9.37289e-05
8.4 9.17595e-05
};
\addplot [semithick, color4, forget plot]
table {%
5.5 1.35672e-07
5.6 4.29705e-06
5.7 7.71841e-06
5.8 1.03305e-05
5.9 1.22956e-05
  6 1.37446e-05
6.1 1.47824e-05
6.2 1.54935e-05
6.3 1.59461e-05
6.4 1.61948e-05
6.5 1.62835e-05
6.6 1.62474e-05
6.7 1.61149e-05
6.8 1.59085e-05
6.9 1.56464e-05
  7 1.5343e-05
7.1 1.50097e-05
7.2 1.46556e-05
7.3 1.4288e-05
7.4 1.39123e-05
7.5 1.35331e-05
7.6 1.31537e-05
7.7 1.27768e-05
7.8 1.24042e-05
7.9 1.20376e-05
  8 1.1678e-05
8.1 1.13261e-05
8.2 1.09826e-05
8.3 1.06476e-05
8.4 1.03214e-05
};
\end{axis}

\end{tikzpicture}}
		\subcaption{}
		\label{fig:errors_bkwa05}
    \end{subfigure}
    \hfill
    \begin{subfigure}[b]{0.49\textwidth}
		\resizebox{0.925\linewidth}{!}{
\begin{tikzpicture}

\definecolor{color2}{rgb}{0.172549019607843,0.627450980392157,0.172549019607843}
\definecolor{color1}{rgb}{1,0.498039215686275,0.0549019607843137}
\definecolor{color4}{rgb}{0.580392156862745,0.403921568627451,0.741176470588235}
\definecolor{color0}{rgb}{0.12156862745098,0.466666666666667,0.705882352941177}
\definecolor{color3}{rgb}{0.83921568627451,0.152941176470588,0.156862745098039}

\begin{axis}[
xmin=5.355, xmax=8.54499999999999,
ymin=1e-9, ymax=0.01,
ymode=log,
tick align=inside,
tick pos=left,
x grid style={white!69.019607843137251!black},
y grid style={white!69.019607843137251!black},
ylabel = {$e_{L_2}(t)$},
xlabel = {time},
]

\addplot [semithick, color0, forget plot]
table {%
5.5 0.00539899
5.6 0.00492044
5.7 0.00456416
5.8 0.00431281
5.9 0.004148
  6 0.00405153
6.1 0.00400677
6.2 0.00399961
6.3 0.00401878
6.4 0.00405573
6.5 0.00410424
6.6 0.00415992
6.7 0.00421972
6.8 0.0042816
6.9 0.00434415
  7 0.00440644
7.1 0.00446784
7.2 0.00452788
7.3 0.00458624
7.4 0.00464267
7.5 0.00469694
7.6 0.00474888
7.7 0.00479831
7.8 0.00484509
7.9 0.00488908
  8 0.00493016
8.1 0.00496822
8.2 0.00500317
8.3 0.00503494
8.4 0.00506348
};
\addplot [semithick, color1, forget plot]
table {%
5.5 0.00104651
5.6 0.000802752
5.7 0.000801736
5.8 0.000977099
5.9 0.00121497
  6 0.00145874
6.1 0.00168837
6.2 0.00189742
6.3 0.00208441
6.4 0.00224982
6.5 0.00239491
6.6 0.00252121
6.7 0.00263033
6.8 0.00272381
6.9 0.00280315
  7 0.0028697
7.1 0.00292472
7.2 0.00296937
7.3 0.00300467
7.4 0.00303156
7.5 0.00305091
7.6 0.00306347
7.7 0.00306993
7.8 0.00307092
7.9 0.003067
  8 0.00305867
8.1 0.00304639
8.2 0.00303058
8.3 0.0030116
8.4 0.00298979
};
\addplot [semithick, color2, forget plot]
table {%
5.5 0.000193099
5.6 0.000170635
5.7 0.000340081
5.8 0.000511777
5.9 0.00066501
  6 0.000798792
6.1 0.000914466
6.2 0.00101375
6.3 0.00109835
6.4 0.00116985
6.5 0.00122968
6.6 0.00127916
6.7 0.00131943
6.8 0.00135155
6.9 0.00137645
  7 0.00139495
7.1 0.00140781
7.2 0.00141568
7.3 0.00141916
7.4 0.00141877
7.5 0.00141499
7.6 0.00140823
7.7 0.00139887
7.8 0.00138725
7.9 0.00137367
  8 0.00135838
8.1 0.00134163
8.2 0.00132363
8.3 0.00130457
8.4 0.00128461
};
\addplot [semithick, color3, forget plot]
table {%
5.5 6.05829e-06
5.6 3.99288e-05
5.7 7.67089e-05
5.8 0.000107345
5.9 0.000132681
  6 0.000153473
6.1 0.000170374
6.2 0.000183943
6.3 0.00019466
6.4 0.000202937
6.5 0.000209127
6.6 0.000213537
6.7 0.000216427
6.8 0.000218024
6.9 0.000218521
  7 0.000218088
7.1 0.00021687
7.2 0.00021499
7.3 0.000212559
7.4 0.000209668
7.5 0.000206398
7.6 0.00020282
7.7 0.000198993
7.8 0.000194969
7.9 0.000190793
  8 0.000186503
8.1 0.000182133
8.2 0.00017771
8.3 0.00017326
8.4 0.000168802
};
\addplot [semithick, color4, forget plot]
table {%
5.5 1.7915e-07
5.6 6.37937e-06
5.7 1.16077e-05
5.8 1.57749e-05
5.9 1.9064e-05
  6 2.16266e-05
6.1 2.35883e-05
6.2 2.50531e-05
6.3 2.61076e-05
6.4 2.68235e-05
6.5 2.72606e-05
6.6 2.74689e-05
6.7 2.74897e-05
6.8 2.73579e-05
6.9 2.71021e-05
  7 2.67467e-05
7.1 2.63118e-05
7.2 2.58141e-05
7.3 2.52678e-05
7.4 2.46846e-05
7.5 2.40742e-05
7.6 2.34447e-05
7.7 2.2803e-05
7.8 2.21546e-05
7.9 2.1504e-05
  8 2.08552e-05
8.1 2.02112e-05
8.2 1.95746e-05
8.3 1.89473e-05
8.4 1.83311e-05
};
\end{axis}

\end{tikzpicture}}
		\subcaption{}
		\label{fig:errors_bkwb05}		
    \end{subfigure}    
    \vspace{0.3cm}
	\begin{subfigure}[b]{0.8\textwidth}
	\begin{tikzpicture}

\definecolor{color2}{rgb}{0.172549019607843,0.627450980392157,0.172549019607843}
\definecolor{color1}{rgb}{1,0.498039215686275,0.0549019607843137}
\definecolor{color4}{rgb}{0.580392156862745,0.403921568627451,0.741176470588235}
\definecolor{color0}{rgb}{0.12156862745098,0.466666666666667,0.705882352941177}
\definecolor{color3}{rgb}{0.83921568627451,0.152941176470588,0.156862745098039}

\begin{axis}
[
hide axis,
xmin=-2.995, xmax=62.895,
ymin=5.53988011611066e-19, ymax=0.216926593216958,
ymode=log,
tick align=outside,
tick pos=left,
x grid style={lightgray!92.026143790849673!black},
legend cell align={left},
y grid style={lightgray!92.026143790849673!black},
legend columns=7,
legend entries={{$N=8$},{$N=12$},{$N=16$},{$N=24$},{$N=32$}},
legend style={at={(0.0,0.0)}, anchor=south west, draw=black}
]
\addlegendimage{no markers, color0}
\addlegendimage{no markers, color1}
\addlegendimage{no markers, color2}
\addlegendimage{no markers, color3}
\addlegendimage{no markers, color4}
\end{axis}
\end{tikzpicture}
	\end{subfigure}
	\caption{\subref{fig:errors_bkwa},\subref{fig:errors_bkwa075},\subref{fig:errors_bkwa05}: $L_\infty-$error $e_{L_\infty}(t)=\|f(t)-f_h(t)\|_{L_\infty}$, \subref{fig:errors_bkwb},\subref{fig:errors_bkwb075},\subref{fig:errors_bkwb05}: $L_2-$ error $e_{L_2}(t)=\|f(t)-f_h(t)\|_{L_\infty}$ over time for different polynomial orders and different numbers $\nip$ of integration nodes w.r.t. $\vbar$. \subref{fig:errors_bkwa}, \subref{fig:errors_bkwb}: $\nip = N$. \subref{fig:errors_bkwa075} and \subref{fig:errors_bkwb075}: $\nip = 0.75N$. \subref{fig:errors_bkwa05}, \subref{fig:errors_bkwb05}: $\nip = 0.5N$. Time stepping with a Runge Kutta 4 scheme $dt=0.1$.}
	\label{fig:errors_bkw}
\end{figure}

\begin{figure}
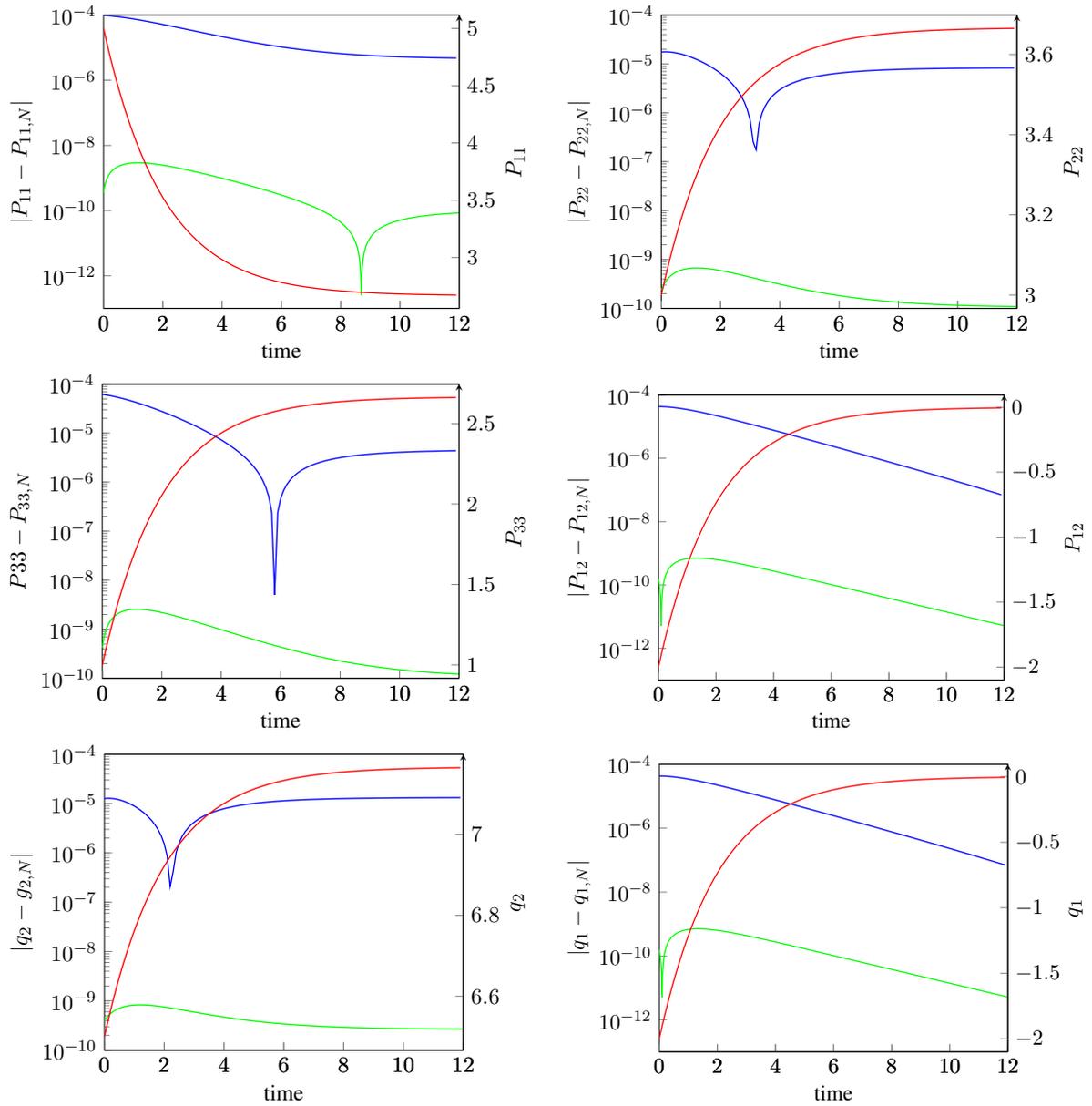

    \centering
    \begin{subfigure}[b]{0.49\textwidth}
		\resizebox{0.999\linewidth}{!}{\input{momentsp11.tikz}}
    \end{subfigure}
    \hfill
    \begin{subfigure}[b]{0.49\textwidth}
		\resizebox{0.999\linewidth}{!}{\input{momentsp22.tikz}}
    \end{subfigure}
    
    \begin{subfigure}[b]{0.49\textwidth}
		\resizebox{0.999\linewidth}{!}{\input{momentsp33.tikz}}
    \end{subfigure}
    \hfill
    \begin{subfigure}[b]{0.49\textwidth}
		\resizebox{0.999\linewidth}{!}{\input{momentsp12.tikz}}
    \end{subfigure}
    
    \begin{subfigure}[b]{0.49\textwidth}
		\resizebox{0.999\linewidth}{!}{\input{momentsq2.tikz}}
    \end{subfigure}
    \hfill
    \begin{subfigure}[b]{0.49\textwidth}
		\resizebox{0.999\linewidth}{!}{\input{momentsq1.tikz}}
    \end{subfigure}
	\caption{Second and third order moments (red) and difference to exact values (blue: $N=8$, green: $N=16$) for Maxwell molecules. Time stepping with a Runge Kutta 4-step method, $dt=0.1$ for $N=8$ and $dt=0.01$ for $N=16$.}
	\label{fig:moments_twopeaks_o8_10}	
\end{figure}

\begin{figure}
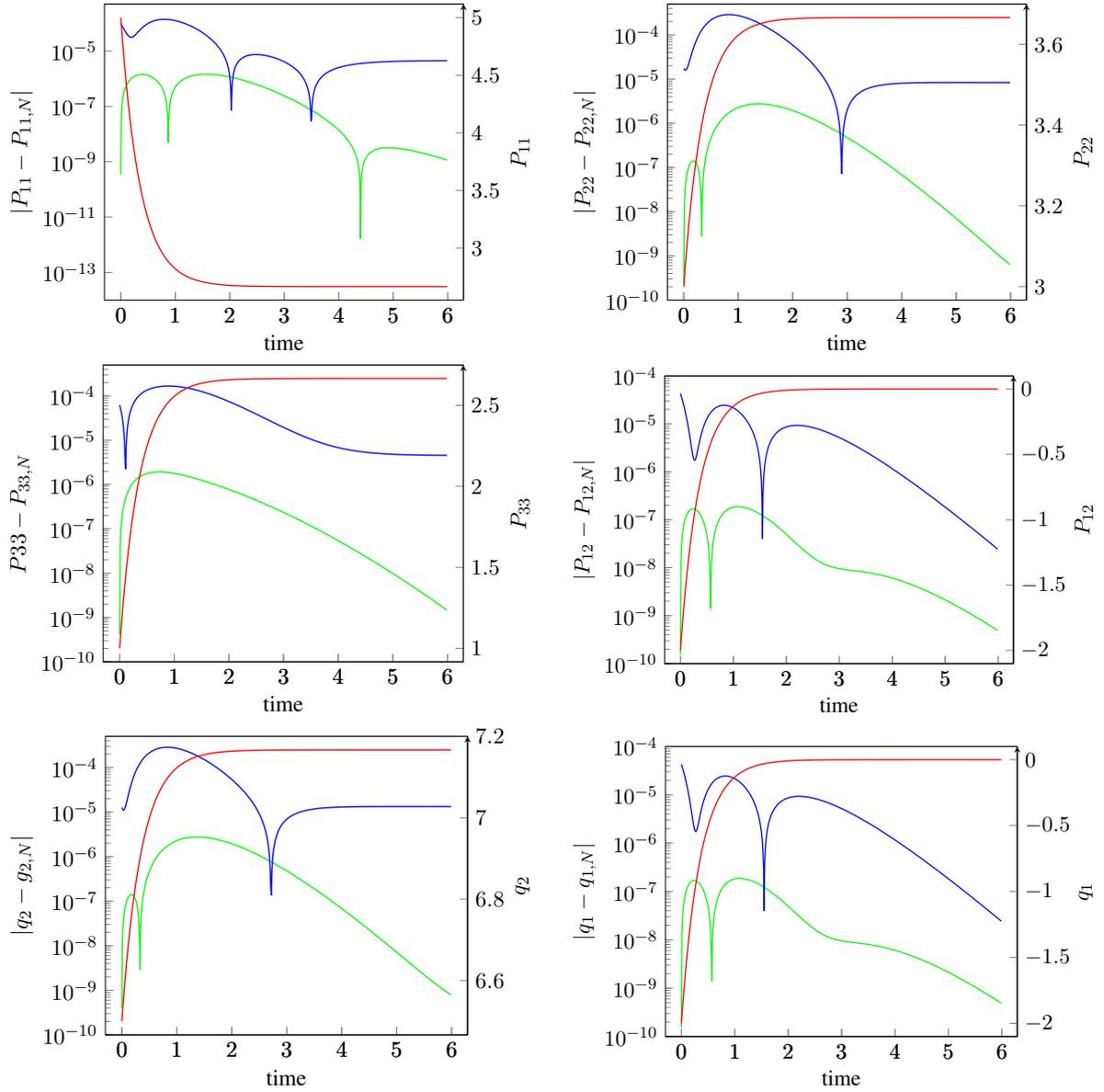

    \centering
    \begin{subfigure}[b]{0.49\textwidth}
		\resizebox{0.999\linewidth}{!}{\input{hsmomentsp11.tikz}}
    \end{subfigure}
    \hfill
    \begin{subfigure}[b]{0.49\textwidth}
		\resizebox{0.999\linewidth}{!}{\input{hsmomentsp22.tikz}}
    \end{subfigure}
    
    \begin{subfigure}[b]{0.49\textwidth}
		\resizebox{0.999\linewidth}{!}{\input{hsmomentsp33.tikz}}
    \end{subfigure}
    \hfill
    \begin{subfigure}[b]{0.49\textwidth}
		\resizebox{0.999\linewidth}{!}{\input{hsmomentsp12.tikz}}
    \end{subfigure}
    
    \begin{subfigure}[b]{0.49\textwidth}
		\resizebox{0.999\linewidth}{!}{\input{hsmomentsq2.tikz}}
    \end{subfigure}
    \hfill
    \begin{subfigure}[b]{0.49\textwidth}
		\resizebox{0.999\linewidth}{!}{\input{hsmomentsq1.tikz}}
    \end{subfigure}
	\caption{Time evolution of second and third order moments (red) for hard sphere molecules and their difference to a reference solution (blue: $N=8$, green: $N=16$). Time stepping with a Runge Kutta 4-step method, $dt=0.01$. The reference solution was calculated with $N=22$ and $dt=0.001$.}
	\label{fig:moments_twopeaks_hardspheres_o8}
\end{figure}

\begin{figure}
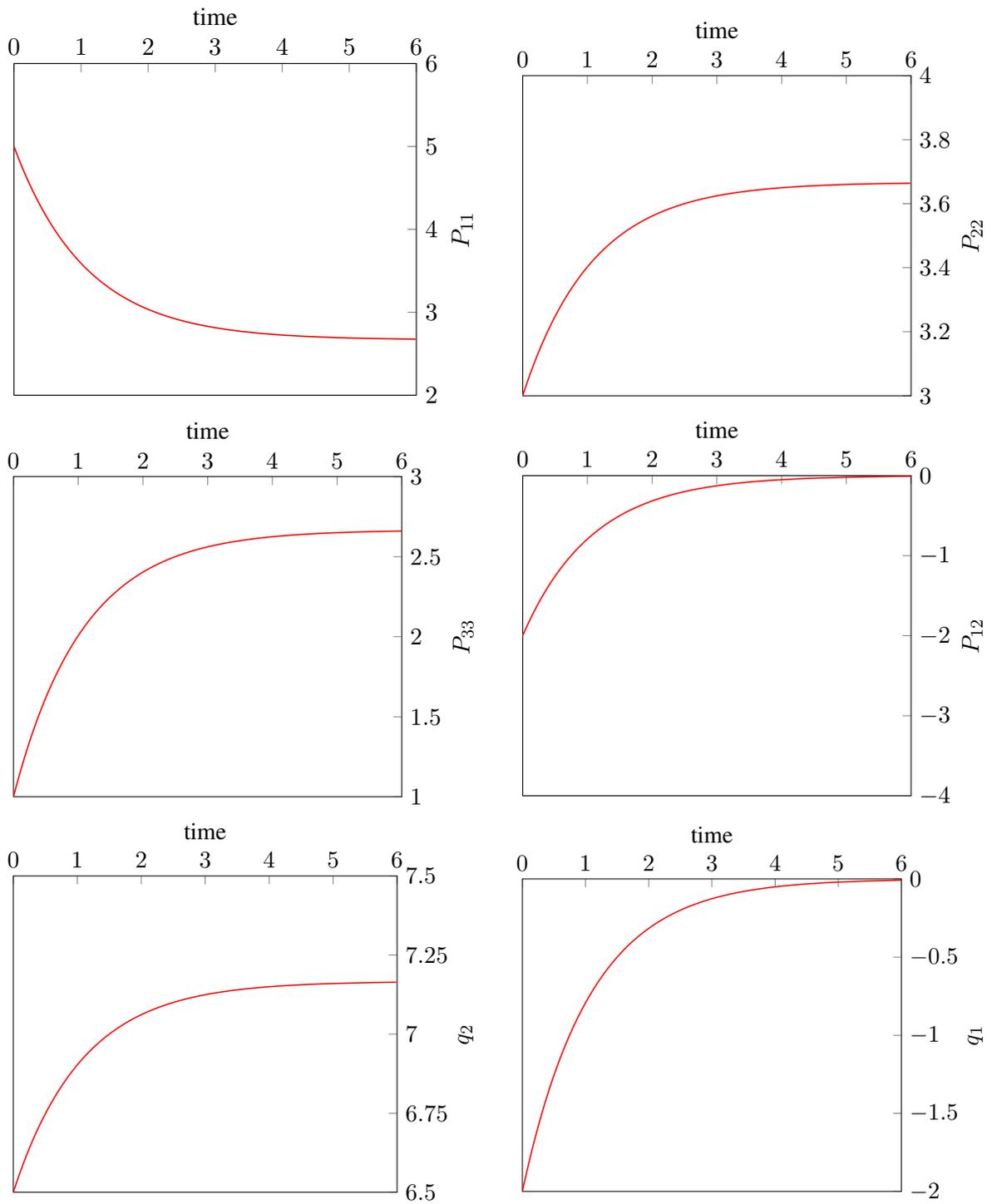

    \centering
    \begin{subfigure}[b]{0.49\textwidth}
		\resizebox{0.999\linewidth}{!}{\input{armomentsp11.tikz}}
    \end{subfigure}
    \hfill
    \begin{subfigure}[b]{0.49\textwidth}
		\resizebox{0.999\linewidth}{!}{\input{armomentsp22.tikz}}
    \end{subfigure}
    
    \begin{subfigure}[b]{0.49\textwidth}
		\resizebox{0.999\linewidth}{!}{\input{armomentsp33.tikz}}
    \end{subfigure}
    \hfill
    \begin{subfigure}[b]{0.49\textwidth}
		\resizebox{0.999\linewidth}{!}{\input{armomentsp12.tikz}}
    \end{subfigure}
    
    \begin{subfigure}[b]{0.49\textwidth}
		\resizebox{0.999\linewidth}{!}{\input{armomentsq2.tikz}}
    \end{subfigure}
    \hfill
    \begin{subfigure}[b]{0.49\textwidth}
		\resizebox{0.999\linewidth}{!}{\input{armomentsq1.tikz}}
    \end{subfigure}
	\caption{Time evolution of second and third order moments for argon molecules. Numerical solution with $N=16$. Time stepping with a Runge Kutta 4-step method, $dt=0.01$.}
	\label{fig:moments_twopeaks_o16_argon}
\end{figure}

\FloatBarrier

\bibliographystyle{unsrt}

\end{document}